\newcommand*{\mailto}[1]{\href{mailto:#1}{\nolinkurl{#1}}}
\newcommand{\eps}{\varepsilon}
\newcommand{\px}{\partial_x}
\newcommand{\py}{\partial_y}
\newcommand{\pxx}{\partial_{xx}^2}
\newcommand{\pt}{\partial_t}
\newcommand{\dx}{\, dx}
\newcommand{\dtx}{\,d\tilde x}
\newcommand{\dy}{\, dy}
\newcommand{\dz}{\, dz}
\newcommand{\dt}{\, dt}
\newcommand{\ds}{\, ds}
\newcommand{\Grad}{\nabla}
\DeclareMathOperator{\Div}{div}
\newcommand{\dotW}{\dot{W}}
\newcommand{\dW}{\,dW}
\newcommand{\cS}{\mathcal{S}}
\newcommand{\tx}{\tilde x}
\newcommand{\cK}{\mathcal{K}}
\newcommand{\cU}{\mathcal{U}}
\newcommand{\cW}{\mathcal{W}}
\newcommand{\Dp}{\mathcal{D}^\prime}
\renewcommand{\d}{\mathrm{d}}
\newcommand{\sgn}{\operatorname{sgn}}
\newcommand{\supp}{\operatorname{supp}\,}
\newcommand{\loc}{\operatorname{loc}}
\newcommand{\R}{\mathbb{R}}
\newcommand{\N}{\mathbb{N}}
\newcommand{\cA}{\mathcal{A}}
\newcommand{\cD}{\mathcal{D}}
\newcommand{\cF}{\mathcal{F}}
\newcommand{\cL}{\mathcal{L}}
\newcommand{\ue}{u_\eps}
\newcommand{\seq}[1]{\left\{#1\right\}}
\newcommand{\Seq}[1]{\left\{#1\right\}}
\newcommand{\set}[1]{\left\{#1\right\}}
\newcommand{\Set}[1]{\left\{#1\right\}}
\newcommand{\abs}[1]{\left|#1\right|}
\newcommand{\absb}[1]{\bigl|#1\bigr|}
\newcommand{\norm}[1]{\left\|#1\right\|}
\newcommand{\bjd}{J_\delta}
\newcommand{\bjdh}{J_{\frac{\delta}{2}}}
\newcommand{\bjn}{J_\nu}
\newcommand{\signb}[1]{\operatorname{sign}\bigl(#1\bigr)}
\newcommand{\todelta}{\xrightarrow{\delta\downarrow 0}}
\newcommand{\ton}{\xrightarrow{n\uparrow \infty}}
\DeclareMathOperator*{\Ex}{\Bbb{E}}
\theoremstyle{theorem}
\newtheorem{theorem}{Theorem}[section]
\newtheorem{proposition}[theorem]{Proposition}
\newtheorem{lemma}[theorem]{Lemma}
\theoremstyle{definition}
\newtheorem{definition}[theorem]{Definition}
\theoremstyle{theorem}
\newtheorem{remark}[theorem]{Remark}
\numberwithin{equation}{section}
\title[Stochastic conservation laws]
{Quantitative compactness estimates for stochastic conservation laws}
\author[Karlsen]{K. H. Karlsen}
\address[Kenneth H. Karlsen]{Department of Mathematics\\
University of Oslo\\
NO-0316 Oslo\\ Norway}
\email{\mailto{kennethk@math.uio.no}}
\date{\today}
\begin{document}

\begin{abstract}
We present a quantitative compensated 
compactness estimate for stochastic conservation laws, 
which generalises a previous result of 
Golse \& Perthame \cite{Golse:2013aa} for deterministic equations. 
With a stochastic modification of Kru{\v{z}}kov's 
interpolation lemma, this estimate provides 
bounds on the rate at which a sequence 
of vanishing viscosity solutions becomes compact.
\end{abstract}

\maketitle

\section{Introduction}\label{sec:intro}
During the last decade many authors investigated the well-posedness of 
hyperbolic conservation laws perturbed by stochastic source terms, see 
\cite{Debussche:2010fk,Feng:2008ul} and the list 
of references in \cite{Frid:2021us}. 
The initial-value problem for these It\^{o}-type 
SPDEs take the form
\begin{equation}\label{eq:scl}
	\begin{split}
		& \pt u + \Div \! f(x,u)
		=\sigma(x,u)\dotW(t) + R(x,u),
		\quad (t,x)\in (0,T)\times \R^d,\\
		& u(0,x)=u_0(x), \quad x\in \R^d,
	\end{split}	
\end{equation}
where $f=(f_1,\ldots,f_d)$ is the flux vector, 
$R$ is the ``deterministic" source term, 
$u_0\in L^\infty(\R^d)$ is the initial 
function, and $T>0$ is a final time.  
The term $\sigma\dotW(t)$ is a stochastic forcing term, 
where $W$ is a cylindrical Wiener 
process \cite{DaPrato:2014aa} with noise amplitude $\sigma$. 
We will refer to the SPDEs \eqref{eq:scl} 
as stochastic conservation laws. Stochastic conservation laws 
are used to model a wide variety of physical systems 
that are subject to random fluctuations and have 
wave-propagating behavior. 

We fix a stochastic basic $\cS$ 
consisting of a complete probability space $(\Omega,\cF,P)$, 
and a complete right-continuous filtration 
$\Seq{\cF_t}_{t\in [0,T]}$. The solution $u$, the Wiener process $W$, 
and all other relevant processes, are always understood as defined 
on $\cS$ and to be appropriately measurable with respect to the 
filtration $\Set{\cF_t}_{t\in [0,T]}$. We refer 
to \cite[Pages 38, 40]{Frid:2021us} for precise 
regularity and growth assumptions on $f$, $\sigma$, $R$. 
For a precise definition of entropy/kinetic solutions and a 
corresponding well-posedness theorem, see 
\cite[Section 5]{Frid:2021us}. Under the assumptions 
that $R\equiv 0$ and $f=f(u)$, we
refer to the original works \cite{Feng:2008ul} (on $\R^d$) and 
\cite{Debussche:2010fk} (on $\Bbb{T}^d$).
 
In this paper, we are interested in deriving 
quantitative estimates 
that can be used to prove the 
convergence in $L^1_{\loc}$ of sequences 
$\Seq{u_n}_{n\in \N}$ of approximate 
solutions to \eqref{eq:scl}. As a concrete example, 
consider the parabolic SPDE
\begin{equation}\label{eq:scl-viscous}
	\begin{split}
		& \pt u_n + \Div \! f(x,u_n)
		-\eps_n \Delta u_n=\sigma(x,u_n)\dotW(t)
		+R(x,u_n),
	\end{split}	
\end{equation}
where $\eps_n\ton 0$. For the well-posedness of classical 
solutions to \eqref{eq:scl-viscous}, see \cite{Feng:2008ul} 
under the assumptions that $R\equiv 0$ and 
$f=f(u)$ does not depend on $x$.  For  
the general context provided by \eqref{eq:scl-viscous}, 
see \cite{Karlsen:2015ab} and 
\cite[Theorem 5.1]{Frid:2021us}. 

In the study of SPDEs on $\R^d$, weight functions are sometimes 
used. These weight functions are used to control 
the growth of solutions as they approach infinity, which 
in turn allows for the derivation of optimal 
conditions on the coefficients of the equations. 
The use of weighted $L^p$ spaces facilitates 
the analysis of stochastic conservation laws 
on $\R^d$ (see, e.g., \cite{Karlsen:2015ab}). 
Denote by $L^p(\chi dx)$ the 
weighted $L^p$ space of functions for which 
$$
\int_{\R^d} \abs{u(x)}^p\,\chi(x)\,dx<\infty,
$$
where $\chi$ is a weight function. The collection of 
relevant weights, denoted by $\cW$, consists 
of $\chi\in C^1(\R^d)\cap L^1(\R^d)$ for which $\chi(x)>0$ and 
$\abs{\nabla \chi(x)} \leq C_\chi \chi(x)$, 
for all $x\in \R^d$, where $C_\chi>0$ is a 
constant depending only on $\chi$.  
A simple example of a (smooth) weight function includes 
$\chi(x)=\chi_N(x)=(1+\abs{x}^2)^{-N}$, $N>d/2$. 
Any weight function $\chi\in \cW$ satisfies the properties
\begin{equation}\label{eq:weight-property}
	\abs{\chi(x+z)-\chi(x)}
	\lesssim \chi(x)\abs{z},
	\quad
	\sup_{\abs{x-y}\leq R}
	\frac{\chi(x)}{\chi(y)}\lesssim_R 1,
\end{equation}
which are used repeatedly in this paper. 
Clearly, $L^p(\R^d)\subset L^p(\chi dx)$, $p\in [1,\infty)$.
Moreover, $\chi^{-1}\in L^\infty_{\loc}(\R^d)$ implies that
$L^p(\chi dx)\subset L^p_{\loc}(\R^d)$.
Since $\chi \in L^1(\R^d)$, we also have 
$L^q(\chi dx) \subset L^p(\chi dx)$ 
for all $q,p$ such that $1 \leq p<q<\infty$. 

Now regarding a priori estimates for $u_n$, one can prove that 
there is an $n$-independent constant $C=C(\chi,p,r)$ such that 
\begin{equation}\label{eq:vv-basic-est}
	\Ex\norm{u_n}_{L^\infty(0,T;L^p(\chi dx))}^r\leq C, 
	\quad
	\Ex\abs{\int_{\R_+}\int_{\R} 
	\eps_n \abs{\nabla u_n}^2\chi(x)\dx\dt}^r\leq C,
\end{equation}
$\forall p,r\in [2,\infty)$ and for any $\chi\in \cW$, 
see \cite{Feng:2008ul}, \cite{Karlsen:2015ab}, 
and \cite[Remark 5.9]{Frid:2021us}. In the general case, $u_n$ does not 
exhibit $n$-independent $L^\infty$ 
and $BV$ estimates \cite[p.~711]{Chen:2012fk}.

In \cite{Chen:2012fk} (see also \cite{Debussche:2010fk}), 
the authors derived some basic quantitative compactness estimates. 
These $n$-uniform estimates, which were used and 
further refined in \cite{Karlsen:2015ab,Karlsen:2016aa} 
and \cite{ChenPang:2021}, take the form
\begin{equation}\label{eq:fracBVx-intro1}
	\Ex\int_{\R^d}\int_{\R^d} \bjd(z)
	\abs{u_n(t,x+z)-u_n(t,x-z)}\chi(x)\dx\dz
	\lesssim_T \delta^{\mu_x},
\end{equation}
for any $t\in (0,T)$ and some $\mu_x\in (0,1)$, 
where $\Seq{\bjd}_{\delta>0}$ is a mollifier sequence. 
One can prove that \eqref{eq:fracBVx-intro1} implies 
a ``fractional $BV$" estimate of the form
\begin{equation}\label{eq:fracBVx-intro2}
	\Ex \sup_{\abs{z}<\delta}
	\int_0^T\norm{u_n(t,\cdot+z)-u_n(t,\cdot)}_{L^1(\chi dx)}
	\dt\lesssim_T \delta^{\mu_x}, 
\end{equation}
for any $\delta>0$, see, 
e.g., \cite[Lemma 2]{Chen:2012fk} 
and Proposition \ref{prop:fracBVx} herein.  
Estimates like \eqref{eq:fracBVx-intro1} are 
often linked to the $L^1$ stability \textit{\`ala} Kru{\v{z}}kov 
of the solution operator.

Using the approximating SPDE and a 
modification \cite{Chen:2012fk,Karlsen:2016aa} 
of an interpolation technique due to Kru{\v{z}}kov, one 
can use the spatial estimate \eqref{eq:fracBVx-intro1} 
to establish that there exists $\mu_t\in (0,1)$ such that 
\begin{equation}\label{eq:fracBVt-intro1}
	\Ex \sup_{\tau\in (0,\delta)}
	\int_0^{T-\delta}\norm{u_n(t+\tau,\cdot)
	-u_n(t,\cdot)}_{L^1(\chi dx)}
	\dt\lesssim_T \delta^{\mu_t},
\end{equation}
see Proposition \ref{prop:fracBVt} for 
a general estimate of this type.

We refer to translation estimates like \eqref{eq:fracBVx-intro2} 
and \eqref{eq:fracBVt-intro1} as quantitative compactness estimates, 
see Section \ref{sec:quant-est} for further discussion and refinements.
They can be used to derive convergence results (via Cauchy sequence 
arguments) and error estimates for approximate solutions. 
Moreover, as part of the stochastic compactness method, one 
can use them to show that the laws $\cL(u_n)$ of $u_n$ 
form a tight sequence of probability  
measures on $L^1(\chi dx)$, which allows for the 
application of Skorokhod's representation theorem.

In \cite{Feng:2008ul}, the authors establish convergence 
of the viscosity approximations \eqref{eq:scl-viscous} 
using compensated compactness, assuming $d=1$, $R\equiv 0$, and the 
genuine nonlinearity of $f=f(u)$ ($f''\neq 0$ a.e.).
The main result of our paper is a 
refinement of the compensated compactness approach---in the 
spirit of \cite{Golse:2013aa}---that leads to 
a spatial compactness estimate 
like \eqref{eq:fracBVx-intro2} for the viscosity 
approximation, under a strengthened nonlinearity 
condition ($\abs{f''™}\geq c>0$). 
A temporal estimate \eqref{eq:fracBVt-intro1} 
follows from this estimate 
via Proposition \ref{prop:fracBVt}. 
Roughly speaking, in Section \ref{sec:CC}, we prove 
\eqref{eq:fracBVx-intro2} with $\delta_x=\frac14-\frac{1}{4p}$, 
for any finite $p>2$, assuming that the viscosity 
approximation $u_n$ is uniformly bounded 
in $L^p_{\omega,t,x}$ for any 
finite $p$, see \eqref{eq:vv-basic-est}. 

In the case that $u_n$ is bounded in $L^\infty_{\omega,t,x}$, 
we recover $\delta_x=\frac14$, which coincides with the 
known Besov regularity exponent (in $x$) of entropy solutions to 
conservation laws with one convex entropy 
and an entropy production that is a signed 
Radon measure \cite[Theorem 5]{Golse:2013aa}.
The quantitative version of compensated compactness  
allows for non-homogenous/discontinuous flux functions 
$f=f(x,u)$, in which case a signed measure arises naturally. 
The details will be presented elsewhere.

\medskip

For simplicity of presentation, we will 
in what follows assume that $W$ is 
a real-valued Wiener process and that 
$\sigma(x,u)$ is a real-valued function. 
The extension to a cylindrical Wiener 
process with corresponding 
operator-valued noise amplitude 
is standard, as discussed in \cite{DaPrato:2014aa} 
and the references cited earlier.

\section{Quantitative compactness estimates}
\label{sec:quant-est}

A subset $K$ of a metric space $(X,d)$ is 
precompact if its closure $\overline{K}$ is compact. 
A subset $K$ of a complete metric space $(X,d)$ is precompact 
if and only if it is totally bounded, meaning that 
for every $\eps>0$ there exists a finite cover of $K$ of open balls 
of radius $\eps$. We will use the  well-known Kolmogorov--Riesz--Fr\'echet
characterization of precompact subsets 
of $L^1_{\loc}(\R^d)$ in terms of the uniform 
continuity of the translation in $L^1(\R^d)$, 
see, e.g., \cite[Theorem 4.26]{Brezis:2010aa}. 

Using the fact that translation is continuous in $L^1$, we have 
the following simple but useful lemma. 

\begin{lemma}\label{lem:KRF-L1-compact}
Let $\cU\subset_b L^1(\chi dx)$, $\chi\in \cW$. 
Fix any $J\in L^1(\R^d)$ with 
$\supp (J) \subset B(0,R)$, 
$R>0$. Then $\cK:=J\star \cU=\set{J\star u: u\in \cU}$ 
is precompact in $L^1_{\loc}(\R^d)$.
\end{lemma}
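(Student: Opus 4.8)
The plan is to apply the Kolmogorov--Riesz--Fr\'echet theorem on each ball $B(0,k)$, $k\in\N$, and conclude precompactness in $L^1_{\loc}(\R^d)$. So I fix a bounded open set $\Omega\subset\R^d$ and verify two things for the family $\cK|_\Omega$: (i) it is bounded in $L^1(\Omega)$, and (ii) the translates are uniformly equicontinuous, i.e. $\sup_{u\in\cU}\norm{(J\star u)(\cdot+h)-J\star u}_{L^1(\Omega)}\to 0$ as $h\to 0$. The third (tightness) hypothesis of Kolmogorov--Riesz is not needed, since precompactness in $L^1_{\loc}$ only requires precompactness of the restriction to each bounded set.

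For (i), set $\Omega':=\Omega+B(0,R)$, which is bounded. For $u\in\cU$ and $x\in\Omega$, the support condition $\supp(J)\subset B(0,R)$ gives $(J\star u)(x)=\int_{B(0,R)}J(y)\,u(x-y)\dy$, hence by Fubini
\[
\int_\Omega\abs{J\star u}\dx\le\norm{J}_{L^1(\R^d)}\int_{\Omega'}\abs{u(z)}\dz .
\]
Since $\chi$ is continuous and strictly positive, $c_{\Omega'}:=\inf_{\Omega'}\chi>0$, so $\int_{\Omega'}\abs{u}\dz\le c_{\Omega'}^{-1}\int_{\R^d}\abs{u}\,\chi\dx\le c_{\Omega'}^{-1}\sup_{v\in\cU}\norm{v}_{L^1(\chi dx)}=:M_{\Omega'}<\infty$, using that $\cU$ is bounded in $L^1(\chi dx)$. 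Therefore $\sup_{u\in\cU}\norm{J\star u}_{L^1(\Omega)}\le\norm{J}_{L^1(\R^d)}M_{\Omega'}$.

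For (ii), write $\tau_h J(x):=J(x+h)$ and use the elementary identity $(J\star u)(\cdot+h)=(\tau_h J)\star u$, so that
\[
\norm{(J\star u)(\cdot+h)-J\star u}_{L^1(\Omega)}=\norm{(\tau_h J-J)\star u}_{L^1(\Omega)} .
\]
For $\abs{h}\le 1$ the kernel $\tau_h J-J$ is supported in $B(0,R+1)$, so repeating the estimate of step (i) with $\Omega'':=\Omega+B(0,R+1)$ yields
\[
\norm{(\tau_h J-J)\star u}_{L^1(\Omega)}\le\norm{\tau_h J-J}_{L^1(\R^d)}\,M_{\Omega''},
\]
uniformly in $u\in\cU$. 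Since translation is continuous in $L^1(\R^d)$ (by density of $C_c(\R^d)$ in $L^1(\R^d)$), $\norm{\tau_h J-J}_{L^1(\R^d)}\to 0$ as $h\to 0$, giving the required uniform equicontinuity. Kolmogorov--Riesz--Fr\'echet then gives precompactness of $\cK|_\Omega$ in $L^1(\Omega)$; letting $\Omega=B(0,k)$ and $k\to\infty$ shows $\cK$ is precompact in $L^1_{\loc}(\R^d)$.

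I do not expect any genuine obstacle here: the only care needed is the bookkeeping of supports, so that all convolutions are integrated over a fixed bounded set independent of $h$ (for $\abs{h}$ small), which is what makes the $L^1(\chi dx)$-bound on $\cU$ usable uniformly. The substantive mechanism is simply that convolution with the fixed kernel $J$ transfers the $L^1$-equicontinuity of translations from $J$ itself to the entire family $\cK$, while the weight $\chi$ plays only the passive role of controlling $\cU$ locally via $\chi^{-1}\in L^\infty_{\loc}(\R^d)$.
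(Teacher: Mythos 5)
Your proof is correct, and it reaches the conclusion by a somewhat different route than the paper. The paper first multiplies by the weight and shows that the family $\set{(J\star u)\chi : u\in\cU}$ is precompact by verifying the Kolmogorov--Riesz--Fr\'echet conditions \emph{globally} on $\R^d$: boundedness in $L^1(\R^d)$ and uniform smallness of translates both rely on the structural properties \eqref{eq:weight-property} of $\chi\in\cW$ (in particular $\chi(x)/\chi(x-y)\lesssim_R 1$ for $\abs{y}\leq R$ and $\abs{\chi(x+z)-\chi(x)}\lesssim\chi(x)\abs{z}$), and local precompactness of $J\star\cU$ is then read off because $\chi>0$ on compact sets. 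You instead dispense with the weight except for the lower bound $\inf_{K}\chi>0$ on compacta, localize everything to bounded sets $\Omega$, and use the compact support of $J$ to confine the convolution to a fixed enlargement $\Omega+B(0,R+1)$, so that the $L^1(\chi\,dx)$-bound on $\cU$ converts into a uniform $L^1$-bound there. The common mechanism is the same in both arguments --- convolution transfers the $L^1$-continuity of translation from the fixed kernel $J$ to the whole family --- but your version is more elementary and needs less from $\chi$ (only continuity and positivity, i.e. $\chi^{-1}\in L^\infty_{\loc}$), whereas the paper's version packages the family as a genuinely bounded, uniformly translation-continuous subset of $L^1(\R^d)$, which matches the literal hypotheses of \cite[Theorem 4.26]{Brezis:2010aa} without invoking a local variant. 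The only point to make explicit in your write-up is that you are using the local form of Kolmogorov--Riesz--Fr\'echet (boundedness and equicontinuity of translates on a neighbourhood of $\Omega$ imply precompactness in $L^1(\Omega)$); this is standard and can be reduced to the global statement by inserting a continuous cutoff equal to $1$ on $\Omega$ and supported in $\Omega''$, which is in effect what the paper's multiplication by $\chi$ accomplishes globally.
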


\begin{proof}
Clearly, as $\chi>0$ on any set $D\subset\subset \R^d$, 
if $J\star \cU \chi:=\set{(J\star u) \chi: u\in \cU}$ 
is precompact in $L^1_{\loc}(\R^d)$, then 
$\cK=J\star \cU$ is precompact 
in $L^1_{\loc}(\R^d)$ as well.

Let us verify the precompactness of $J\star \cU \chi$ using 
the Kolmogorov--Riesz--Fr\'echet theorem. 
First, we claim that the set $J\star \cU \chi$ 
is bounded in $L^1(\R^d)$. Indeed,
\begin{align}\label{eq:J-u-chi-L1}
	&\norm{J\star u \chi}_{L^1(\R^d)}
	\leq \int_{\R^d}\int_{\R^d} \abs{J(y)}
	\abs{u(x-y)}\chi(x)\dx\dy
	\\ \notag & \quad
	=\int_{\R^d}\int_{\R^d} \abs{J(y)}
	\abs{u(x-y)}\chi(x-y)
	\frac{\chi(x)}{\chi(x-y)}\dx\dy
	\overset{\eqref{eq:weight-property}}{\lesssim_R}
	\norm{J}_{L^1(\R^d)}
	\norm{u}_{L^1(\chi dx)}.
\end{align}
Next, we verify the translation condition. 
For any translation $z\in \R^d$,
\begin{align*}
	&\norm{J\star u \chi)(\cdot+z)-J\star u\chi}_{L^1(\R^d)}
	\leq \int_{\R^d} 
	\abs{J\star u(x+z)-J\star u(x)}\chi(x)\dx\dy
	\\ & \qquad \qquad
	+\int_{\R^d} J\star u(x+z)
	\abs{\chi(x+z)-\chi(x)}\dx
	=: I_1+I_2,
\end{align*}
where $I_1 \overset{\eqref{eq:weight-property}}{\lesssim_R}
\norm{J(\cdot+z)-J}_{L^1(\R^d)}\norm{u}_{L^1(\chi dx)}$
and 
\begin{align*}
	I_2 & 
	\overset{\eqref{eq:weight-property}}{\leq}
	\abs{z}\int_{\R^d} \abs{J\star u(x+z)\chi(x+z)} 
	\frac{\chi(x)}{\chi(x+z)}\,dx
	\\ & 
	\overset{\eqref{eq:weight-property}}{\lesssim}
	\abs{z}\norm{J\star u \chi}_{L^1(\R^d)}
	\overset{\eqref{eq:J-u-chi-L1}}{\lesssim_R}
	\abs{z} \norm{J}_{L^1(\R^d)}
	\norm{u}_{L^1(\chi dx)}.
\end{align*}
Consequently, as $\abs{z}\to 0$,
$\norm{(J\star u \chi)(\cdot+z)-J\star u\chi}_{L^1(\R^d)}
\to 0$, uniformly in $J\star u \chi$ with $u\in \cU$.
An application of \cite[Theorem 4.26]{Brezis:2010aa} 
concludes the proof.
\end{proof}

\begin{definition}[approximate identity]
An approximate identity (or a convolution kernel) is 
a family $\seq{J_\delta}_{\delta>0}$ of 
$L^1(\R^d)$ functions $J_{\delta}:\R^d\to \R$ such that
\begin{align*}
	& \text{(i) $J_\delta$ is nonnegative and 
	$\int_{\R^d} J_\delta(x)\dx=1$, for each $\delta>0$},\\
	& \text{(ii) for each $h>0$, \; $\lim_{\delta\to 0}
	\int_{\abs{x}\ge h} J_\delta(x)\dx=0$.}
\end{align*}
\end{definition}

\begin{remark}
One simple (compactly supported) 
example is the standard (Friedrichs) mollifier. 
More generally, given a positive function $J$ with $\int J=1$, 
the rescaled family $J_\delta(x)=\frac{1}{\delta^d}J(x/\delta)$ 
supplies an approximate identify.
\end{remark}

Fix an approximate identity $\seq{\bjd}_{\delta>0}$. 
Consider a sequence $\seq{u_n}_{n\ge1}$ for which 
$\norm{u_n}_{L^1(\chi dx)}\lesssim 1$ and
\begin{equation}\label{eq:conv-conv-deltan}
	\lim_{\delta\to 0} \limsup_{n\to \infty} 
	\norm{u_n-\bjd\star u_n}_{L^1(\chi dx)}=0.
\end{equation}
Then $\seq{u_n}_{n\ge1}$ is precompact in $L^1_{\loc}(\R^d)$. 
Indeed, as $\seq{u_n}_{n\ge1}$ is bounded in $L^1(\chi dx)$, 
Lemma \ref{lem:KRF-L1-compact} supplies the 
precompactnes in $L^1(D)$ of the sequence $\seq{\bjd\star u_n}_{n\ge1}$, 
for each fixed $\delta>0$, and for any $D\subset \subset \R^d$.
Therefore, it is totally bounded in $L^1(D)$. In view 
of \eqref{eq:conv-conv-deltan} and since $\chi>0$ on $D$, 
the sequence $\seq{u_n}_{n\ge1}$ is 
also totally bounded---and thus precompact---in $L^1(D)$.

\begin{lemma}
Let $\seq{\bjd}_{\delta>0}$ be an approximate 
identity, and consider a sequence $\seq{u_n}_{n\in \N}$ 
of functions on $\R^d$. 
Suppose $\norm{u_n}_{L^1(\chi dx)}\lesssim 1$ and, 
for any $\delta>0$,
\begin{equation}\label{eq:compact-cond-kernel-space}
	\int_{\R^d}\int_{\R^d} \bjd(z)
	\abs{u_n(x+z)-u_n(x-z)}\chi(x)\dz\dx 
	\leq \rho(\delta),
\end{equation}
where $\rho:[0,\infty)\to [0,\infty)$ is an 
increasing function that is continuous at $0$ with $\rho(0)=0$ 
($\rho$ is independent of $n$). Then $\seq{u}_{n\in \N}$ is 
precompact in $L^1_{\loc}(\R^d)$.
\end{lemma}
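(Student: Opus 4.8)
The plan is to read off the precompactness from the criterion established in the paragraph just before the lemma: any sequence that is bounded in $L^1(\chi dx)$ and satisfies \eqref{eq:conv-conv-deltan} for some approximate identity is precompact in $L^1_{\loc}(\R^d)$. Since $\norm{u_n}_{L^1(\chi dx)}\lesssim 1$ is assumed, the whole task is to exhibit an approximate identity $\seq{K_\delta}_{\delta>0}$ with $\lim_{\delta\to0}\limsup_{n\to\infty}\norm{u_n-K_\delta\star u_n}_{L^1(\chi dx)}=0$. The one genuine idea is a change of variables turning the \emph{symmetric} difference $u_n(x+z)-u_n(x-z)$ appearing in \eqref{eq:compact-cond-kernel-space} into the one-sided difference $u_n-u_n(\cdot-w)$ that governs $\norm{u_n-K_\delta\star u_n}_{L^1(\chi dx)}$.

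First I would reduce to the case in which $\bjd$ is compactly supported: replacing $\bjd$ by its truncation $c_\delta^{-1}\bjd\,\mathbf{1}_{B(0,1)}$, with $c_\delta:=\int_{B(0,1)}\bjd(x)\dx\todelta 1$, produces another approximate identity, now supported in $B(0,1)$, and---being pointwise $\le c_\delta^{-1}\bjd$---the sequence $\seq{u_n}$ still obeys \eqref{eq:compact-cond-kernel-space}, with $\rho$ replaced by a rate function enjoying the same qualitative properties. So I may assume $\supp\bjd\subseteq B(0,1)$ throughout. Then I set $K_\delta(w):=2^{-d}\bjd(w/2)$; this kernel is nonnegative, has $\int K_\delta=1$, is supported in $B(0,2)$, and obeys $\int_{\abs w\ge h}K_\delta(w)\,dw=\int_{\abs z\ge h/2}\bjd(z)\dz\todelta 0$, so $\seq{K_\delta}$ is again a (compactly supported) approximate identity; this is the kernel I feed to the criterion.

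The core computation bounds $\norm{u_n-K_\delta\star u_n}_{L^1(\chi dx)}$. Since $\int K_\delta=1$ one has $u_n-K_\delta\star u_n=\int_{\R^d}K_\delta(w)\bigl(u_n(\cdot)-u_n(\cdot-w)\bigr)\,dw$, hence by Tonelli $\norm{u_n-K_\delta\star u_n}_{L^1(\chi dx)}\le\iint K_\delta(w)\abs{u_n(x)-u_n(x-w)}\chi(x)\,dw\dx$. Writing $u_n(x)-u_n(x-w)=u_n\bigl((x-\tfrac w2)+\tfrac w2\bigr)-u_n\bigl((x-\tfrac w2)-\tfrac w2\bigr)$, translating $x\mapsto x+\tfrac w2$ and rescaling $w\mapsto 2z$ (so that $K_\delta(w)\,dw=\bjd(z)\dz$), this double integral becomes $\iint\bjd(z)\abs{u_n(x+z)-u_n(x-z)}\chi(x+z)\dz\dx$. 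Only $\abs z\le 1$ contributes, so $\chi(x+z)\lesssim\chi(x)$ by \eqref{eq:weight-property}, and \eqref{eq:compact-cond-kernel-space} then gives $\norm{u_n-K_\delta\star u_n}_{L^1(\chi dx)}\lesssim\rho(\delta)$ with a constant independent of $n$ and $\delta$. Letting $n\to\infty$ and then $\delta\to0$, and using that $\rho$ is continuous at $0$ with $\rho(0)=0$, yields \eqref{eq:conv-conv-deltan} for $\seq{K_\delta}$; the criterion---that is, Lemma \ref{lem:KRF-L1-compact} applied to the compactly supported $K_\delta$, followed by the totally bounded argument---then delivers precompactness of $\seq{u_n}$ in $L^1_{\loc}(\R^d)$.

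I do not anticipate a deep obstacle; the one point requiring care is the weight bookkeeping---ensuring that carrying $\chi$ through the convolution and the rescaling $w\mapsto 2z$ costs only a harmless $n$- and $\delta$-independent constant. This is exactly what the reduction to a fixed compact support, together with the two inequalities in \eqref{eq:weight-property}, is designed to handle; without it a tail term of the form $\int_{\abs z>1}\bjd(z)\,e^{C_\chi\abs z}\dz$ would intrude, and this need not be small for a poorly behaved approximate identity.
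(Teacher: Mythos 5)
Your proof is correct, and at its core it follows the same route as the paper: reduce to the criterion \eqref{eq:conv-conv-deltan} (via Lemma \ref{lem:KRF-L1-compact} and the totally-bounded argument), bound the $\chi$-weighted distance between $u_n$ and a mollified version by the symmetric-difference quantity through the midpoint change of variables, and swap $\chi(\tilde x+z)$ for $\chi(\tilde x)$ using \eqref{eq:weight-property}. There are, however, two refinements worth noting. First, you convolve with the dilated kernel $K_\delta(w)=2^{-d}\bjd(w/2)$ rather than with $\bjd$ itself; this lets you invoke \eqref{eq:compact-cond-kernel-space} at scale $\delta$ exactly, whereas the paper's computation passes from $\bjd(2z)$ to $\bjdh(z)$ and then uses $\rho(\delta/2)\le\rho(\delta)$ --- a step that tacitly assumes the rescaled structure $\bjd(x)=\delta^{-d}J(x/\delta)$, which a general approximate identity need not have. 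Second, your preliminary truncation of $\bjd$ to $B(0,1)$ (with renormalisation $c_\delta^{-1}$, $c_\delta\to1$) is exactly what justifies the weight comparison $\chi(\tilde x+z)\lesssim\chi(\tilde x)$, since \eqref{eq:weight-property} is only uniform for bounded translations; the paper's proof applies this inequality without restricting the support of the kernel, which is harmless for Friedrichs-type mollifiers but, as you observe, would otherwise produce an exponential tail term. The only cosmetic caveat is that the renormalised rate $c_\delta^{-1}\rho(\delta)$ need not be monotone, but monotonicity of $\rho$ is never used in the limiting argument (only $\rho(\delta)\to0$), so this does not affect the conclusion. In short: same strategy, but your version is honest for arbitrary approximate identities and makes explicit two points the paper's proof glosses over.
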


\begin{proof}
Since
\begin{align*}
	&\int_{\R^d} \abs{u_n(x)-\bjd\star u_n(x)}\chi(x)\dx
	\\ & \quad 
	\leq \int_{\R^d}\int_{\R^d} 
	\bjd(x-y)\abs{u_n(x)-u_n(y)}\chi(x)\dy\dx
	\\ & \quad 
	= 2^d\int_{\R^d}\int_{\R^d}
	\bjdh(z)\abs{u_n(\tx+z)-u_n(\tx-z)}\chi(\tx+z) \dz\,\d\tx
	\\ & \quad
	\overset{\eqref{eq:weight-property}}{\lesssim}
	\int_{\R^d}\int_{\R^d}
	\bjdh(z)\abs{u_n(\tx+z)-u_n(\tx-z)}\chi(\tx)
	\dz\,\d\tx
	\overset{\eqref{eq:compact-cond-kernel-space}}{\leq}
	\rho\bigl(\delta/2\bigr)
	\leq \rho(\delta)
	\todelta 0,
\end{align*}
uniformly in $n$, the compactness 
condition \eqref{eq:conv-conv-deltan} follows.
\end{proof}

In what follows, we return to functions
$u_n=u_n(\omega,t,x):\Omega\times (0,T)\times \R^d$, 
$n\in \N$, depending also on the probability ($\omega$)
and temporal ($t$) variables.  The estimate 
\eqref{eq:compact-cond-kernel-space}, for 
suitable choices of  the ``modulus of continuity" $\rho(\cdot)$ 
and the approximate identity $\seq{\bjd}_{\delta>0}$, 
can be turned into a fractional $BV$ estimate like 
\eqref{eq:fracBVx-intro2}. This fact 
is related to known links between Sobolev, Besov, and Nikolskii 
fractional spaces (see, for example, \cite{Simon:1990fk}). 
More generally, we have

\begin{proposition}[quantitative compactness 
estimate in space]\label{prop:fracBVx}
Fix a weight $\chi\in\cW$ and a standard Friedrichs 
mollifier $\seq{\bjd}_{\delta>0}$. 
Consider a sequence $\seq{u_n}_{n\in \N}$ 
of functions satisfying 
$\Ex \int_0^T \int_{\R^d}\abs{u_n(t,x)}
\chi(x)\dx\dt\lesssim 1$ and
\begin{equation}\label{eq:fracBVx-ass1}
	\Ex \int_0^T\int_{\R^d}\int_{\R^d} \bjd(z)
	\abs{u_n(t,x+z)-u_n(t,x-z)}\chi(x)\dz\dx\dt 
	\leq \rho^x(\delta),
\end{equation}
where $\rho^x:[0,\infty)\to [0,\infty)$ is an 
increasing function that is continuous at $0$ with $\rho^x(0)=0$ 
($\rho^x$ is independent of $n$ but may depend on $\chi$, $T$).

Then $u_n$ satisfies the quantitative 
compactness (spatial translation) estimate
\begin{equation}\label{eq:un-spatial-L1}
	\Ex \sup_{\abs{z}<\delta} 
	\int_0^T\int_{\R^d}\abs{u_n(t,x+z)-u_n(t,x)}
	\, \chi(x)\dx\dt 
	\leq C	\rho^x(\delta), \quad \delta>0,
\end{equation}
where the constant $C=C(\chi,T)$ is independent of $n$.
\end{proposition}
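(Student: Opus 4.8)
The plan is to run everything through the symmetric--difference functional
\[
	g_{\omega,t}(z):=\int_{\R^d}\abs{u_n(t,x+z)-u_n(t,x-z)}\,\chi(x)\dx,
	\qquad z\in\R^d,
\]
attached to each fixed $\omega$, $t$, $n$, and to reduce \eqref{eq:un-spatial-L1} to the task of upgrading a \emph{spatial average} of $g_{\omega,t}$ into a \emph{supremum}. Two preliminary reductions, both using only the weight property \eqref{eq:weight-property}: first, writing a one--sided translate as a symmetric difference through $x=y-z/2$,
\begin{align*}
	\norm{u_n(t,\cdot+z)-u_n(t,\cdot)}_{L^1(\chi dx)}
	&=\int_{\R^d}\absb{u_n\bigl(t,y+\tfrac z2\bigr)-u_n\bigl(t,y-\tfrac z2\bigr)}\,\chi\bigl(y-\tfrac z2\bigr)\dy
	\\
	&\lesssim g_{\omega,t}(z/2),\qquad \abs z<\delta,
\end{align*}
with an implied constant that stays bounded for $\delta$ in a bounded range; second, since the standard Friedrichs kernel satisfies $\bjd(w)\ge c\,\delta^{-d}$ on the ball $B(0,(1-\eta)\delta)$ for a fixed $\eta\in(0,\tfrac12)$ (as $J$ is continuous and strictly positive on $B(0,1)$), the hypothesis \eqref{eq:fracBVx-ass1}, which reads $\Ex\int_0^T\int_{\R^d}\bjd(w)\,g_{\omega,t}(w)\,dw\,\dt\le\rho^x(\delta)$, controls the average of $g_{\omega,t}$ over $B(0,(1-\eta)\delta)$:
\begin{align*}
	\Ex\int_0^T \frac{1}{\abs{B(0,(1-\eta)\delta)}}\int_{B(0,(1-\eta)\delta)} g_{\omega,t}(w)\,dw\,\dt
	&\;\lesssim\; \Ex\int_0^T\int_{\R^d}\bjd(w)\,g_{\omega,t}(w)\,dw\,\dt
	\\
	&\;\le\;\rho^x(\delta).
\end{align*}
The mean $L^1(\chi dx)$ bound on $u_n$ enters only to make $g_{\omega,t}$ finite and, by translation continuity in $L^1(\chi dx)$, continuous in $z$.

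The heart of the matter is a quasi--subadditivity bound for $g_{\omega,t}$. Splitting $u_n(t,y+a+b)-u_n(t,y-a-b)$ at the intermediate point $y+a-b$ and translating the integration variable in each of the two resulting integrals, \eqref{eq:weight-property} gives
\[
	g_{\omega,t}(a+b)\;\le\; C\bigl(g_{\omega,t}(a)+g_{\omega,t}(b)\bigr),
\]
with $C$ depending only on an upper bound for $\abs a,\abs b$ and on $\chi$. Now fix $\abs z<\delta/2$. For every $w$ in $B(0,r)$ with $r:=\bigl(\tfrac12-\eta\bigr)\delta$ one has $\abs{z-w}<(1-\eta)\delta$, hence $g_{\omega,t}(z)\le C\bigl(g_{\omega,t}(w)+g_{\omega,t}(z-w)\bigr)$; averaging this inequality over $w\in B(0,r)$ and noting that both $B(0,r)$ and the translated ball $z-B(0,r)$ lie inside $B(0,(1-\eta)\delta)$, we bound each of the two $w$-averages by a dimensional multiple of the average of $g_{\omega,t}$ over $B(0,(1-\eta)\delta)$. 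Combined with the second reduction this gives, pointwise in $(\omega,t)$,
\begin{align*}
	\sup_{\abs z<\delta/2} g_{\omega,t}(z)
	&\;\lesssim\;\frac{1}{\abs{B(0,(1-\eta)\delta)}}\int_{B(0,(1-\eta)\delta)} g_{\omega,t}(w)\,dw
	\\
	&\;\lesssim\;\int_{\R^d}\bjd(w)\,g_{\omega,t}(w)\,dw,
\end{align*}
with implied constants uniform in $n$ and in $\delta$ from a bounded range. Choosing the fraction $1-\eta>\tfrac12$ rather than exactly $\tfrac12$ is precisely what lets the \emph{shifted} averaging ball still fit inside the region where $\bjd$ is bounded below, and this is what prevents a spurious rescaled $\rho^x(c\delta)$, $c>1$, from appearing on the right.

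It remains to assemble. By continuity of $z\mapsto g_{\omega,t}(z)$ the supremum over $\abs z<\delta/2$ coincides with the supremum over $z$ in a fixed countable dense set, hence is jointly measurable, and $\sup_z\int_0^T(\dott)\dt\le\int_0^T\sup_z(\dott)\dt$ for nonnegative integrands. Applying the first reduction, then this interchange, then the pointwise bound above, and finally the hypothesis \eqref{eq:fracBVx-ass1},
\begin{align*}
	\Ex\sup_{\abs z<\delta}\int_0^T\norm{u_n(t,\cdot+z)-u_n(t,\cdot)}_{L^1(\chi dx)}\dt
	&\;\lesssim\;\Ex\int_0^T\sup_{\abs z<\delta/2} g_{\omega,t}(z)\dt
	\\
	&\;\lesssim\;\Ex\int_0^T\int_{\R^d}\bjd(w)\,g_{\omega,t}(w)\,dw\,\dt\;\le\;\rho^x(\delta),
\end{align*}
the accumulated constant depending only on $\chi$, the dimension, and the fixed mollifier; for $\delta$ outside a bounded range the claim is immediate from the mean $L^1(\chi dx)$ bound and the monotonicity of $\rho^x$. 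The quasi--subadditivity step together with the ball bookkeeping needed to keep the right--hand side equal to $\rho^x(\delta)$ (and not a rescaling of it) is the only genuinely delicate point; everything else is a change of variables and Fubini.
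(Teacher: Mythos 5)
Your argument is correct, but it is genuinely different from the one in the paper. The paper splits the translate into a mollified part plus a commutator error, writes $\bjd\star u_n(\cdot+z)-\bjd\star u_n$ against the zero-mean kernel $\kappa_{z,\delta}=\bjd(\cdot+z)-\bjd$, obtains $A(z)\le C r\,\sigma_u(r\delta)$ for $\abs{z}\le r\delta$ via $\norm{\Grad\bjd}_{L^\infty}\lesssim\delta^{-d-1}$, and then \emph{absorbs} the small multiple of the modulus $\Ex\sigma_u(r\delta)$ into the left-hand side by fixing $r$ small (this absorption silently uses that $\Ex\sigma_u$ is finite, which follows from the assumed $L^1(\chi dx)$ bound). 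You instead prove a quasi-subadditivity $g_{\omega,t}(a+b)\le C\bigl(g_{\omega,t}(a)+g_{\omega,t}(b)\bigr)$ for the symmetric-difference functional and combine it with the pointwise lower bound $\bjd\ge c\,\delta^{-d}$ on $B(0,(1-\eta)\delta)$ to convert the averaged hypothesis directly into the pointwise-in-$(\omega,t)$ bound $\sup_{\abs{z}<\delta/2}g_{\omega,t}(z)\lesssim\int\bjd(w)\,g_{\omega,t}(w)\,dw$, with no absorption and hence no a priori finiteness needed; your careful choice of $r=(\tfrac12-\eta)\delta$ so that the shifted averaging ball stays inside $B(0,(1-\eta)\delta)$ is exactly what keeps the right-hand side at $\rho^x(\delta)$ rather than a rescaling. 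Your route in fact yields the slightly stronger conclusion $\Ex\int_0^T\sup_{\abs{z}<\delta}(\cdots)\dt\le C\rho^x(\delta)$, and it uses only a lower bound on the kernel rather than a gradient bound; the trade-off is that it is tied to kernels bounded below on a definite fraction of their support (true for the standard Friedrichs mollifier assumed here, but not for a general approximate identity, for which the paper's kernel-difference argument also adapts more readily). Both proofs share the same implicit restriction that the constants from \eqref{eq:weight-property} are uniform only for $\delta$ in a bounded range, which you at least acknowledge; no further repair is needed for the intended regime of small $\delta$.
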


\begin{proof}
For $\abs{z}>0$ and $\delta>0$, 
\begin{equation}\label{eq:BValpha-proof}
	\begin{split}
		& \int_0^T\int_{\R^d}\abs{u_n(t,x+z)-u_n(t,x)}
		\, \chi(x)\dx \dt
		\\ & \quad 
		\leq \int_0^T\int_{\R^d}\abs{\bjd\star u_n(t,x+z)
		-\bjd\star u_n(t,x)}\, \chi(x) \dx\dt
		\\ & \quad 
		\qquad +2 \int_0^T\int_{\R^d}
		\abs{\bjd\star u_n(t,x)-u_n(t,x)}
		\,\chi(x)\dx\dt=: A(z)+B,
	\end{split}
\end{equation}
where, by assumption,
\begin{align*}
	& B\leq 2\int_0^T\int_{\R^d}\int_{\R^d}
	\bjd(x-y)\abs{u_n(x)-u_n(y)}\chi(x)\dx \dy \dt
	\\ & \quad 
	= 2^{-d+1}\Ex \int_0^T\int_{\R^d}\int_{\R^d}
	\bjdh(z)\abs{u_n(\tx+z)-u_n(\tx-z)}\chi(\tx+z)
	\, d\tx \dz \dt
	\\ & \quad 
	\overset{\eqref{eq:weight-property}}{\lesssim}
	\Ex \int_0^T\int_{\R^d}\int_{\R^d}
	\bjdh(z)\abs{u_n(\tx+z)-u_n(\tx-z)}\chi(\tx)
	\,\d\tx \dz\dt 
	\overset{\eqref{eq:fracBVx-ass1}}{\lesssim} 
	\rho^x(\delta/2),
\end{align*}
so $B\lesssim\rho^x(\delta)$. For any $\delta>0$, we 
introduce the random variable
\begin{equation*}
	\sigma_u(\delta):=\sup_{\abs{z}<\delta}
	\int_0^T\int_{\R^d}
	\abs{u_n(t,x+z)-u_n(t,x)}\, \chi(x) \dx\dt,
\end{equation*}
which is a sub-additive/increasing 
modulus of continuity (for each fixed $\omega\in \Omega$). 
We obtain from \eqref{eq:BValpha-proof} that
\begin{equation}\label{eq:BValpha-proof-II}
	\Ex \sigma_u(\delta)
	\lesssim 
	\Ex \sup_{\abs{z}<\delta} A(z)
	+\rho^x(\delta).
\end{equation}

Note that
\begin{align*}
	& \bjd\star u_n(t,x+z)-\bjd\star u_n(t,x)
	\\ & \quad 
	=\int_{\R^d} \kappa_{z,\delta}(y)u_n(t,x-y)\dy
	=\int_{\R^d} \kappa_{z,\delta}(y)
	\bigl(u_n(t,x-y)-u_n(t,x)\bigr)\dy,
\end{align*}
with $\kappa_{z,\delta}(y)
:=\bjd(y+z)-\bjd(y)$ satisfying
$\int_{\R^d} \kappa_{z,\delta}(y)\dy=0$ 
and $\supp(\kappa_{z,\delta})
\subset B(0,\abs{z}+\delta)$.
For $\abs{z}\leq r\delta$, 
with $r\in (0,1)$ to be fixed later,
\begin{align*}
	A(z) &\leq \int_0^T\int_{\R^d} \int_{\R^d}
	\abs{\kappa_{z,\delta}(y)}
	\abs{u_n(t,x)-u_n(t,x-y)}\dy \chi(x) \dx
	\\ &  
	\leq \int_{\R^d}
	\abs{\kappa_{z,\delta}(y)}\dy\,
	\sigma_u\bigl(\abs{z}+\delta\bigr)
	\\ & 
	\leq C_1\abs{z}\norm{\Grad \bjd}_{L^\infty(\R^d)}
	\abs{B(0,\abs{z}+\delta)} 
	\sigma_u\bigl(\abs{z}+\delta\bigr)
	\\ &  
	\leq 
	C_2\frac{\abs{z}\left(\abs{z}+\delta\right)^d}{\delta^{d+1}}
	\sigma_u(\abs{z}+\delta)
	\leq C_3 r \sigma_u(r\delta+\delta)
	\leq C_4 r \sigma_u(r\delta),
\end{align*}
using also the sub-additivity of $\sigma_u(\cdot)$. 

Consequently, going back 
to \eqref{eq:BValpha-proof-II}, 
$\Ex \sigma_u(r\delta)\leq 
C_5r\Ex \sigma_u(r\delta)
+C_6\rho^x(r\delta)$. Fixing $r\in (0,1)$ such 
that $C_5r=\frac12$, we arrive at
$\Ex \sigma_u(r\delta)\leq 
\frac12\Ex \sigma_u(r\delta)
+C_6\rho^x(\delta)$ and thus $\Ex \sigma_u(\delta)
\leq C_7\rho^x(\delta)$. 
This concludes the proof of \eqref{eq:un-spatial-L1}.
\end{proof}

One can use the spatial estimate 
\eqref{eq:fracBVx-ass1} to derive a 
quantitative compactness estimate in time.   
To do that we need a version of a celebrated 
interpolation result due to Kru{\v{z}}kov \cite{Kruzkov:1969th}, 
which trades spatial regularity, here quantified in terms 
of \eqref{eq:fracBVx-ass1}, for temporal 
$L^1(\chi dx)$ continuity. 

\begin{proposition}[quantitative compactness estimate in time]
\label{prop:fracBVt}
Fix $m\in \N$ and a weight $\chi\in \cW\cap W^{m,\infty}(\R^d)$ 
such that $\abs{D^\alpha \chi (x)}\lesssim \chi(x)$ for 
any multi-index $\alpha$ with $\abs{\alpha}\leq m$. 
Let $\seq{\bjd}_{\delta>0}$ be 
an approximate identity such that the support of 
$\bjd$ is bounded independently of $\delta>0$ and 
$\norm{D^\alpha\bjd}_{L^1(\R^d)}
\lesssim \delta^{-\abs{\alpha}}$, for 
$\abs{\alpha}\leq m$, which includes, e.g., 
a standard Friedrichs mollifier. 
Consider a sequence $\seq{u_n}_{n\in \N}$ of  
functions on $\Omega\times (0,T)\times \R^d$, 
with $T>0$ fixed, satisfying 
\begin{align}
	& \Ex\int_0^T\int_{\R^d} \abs{u_n(t,x)}
	\chi(x)\dx\dt \lesssim 1,
	\label{eq:weighted-L1}
	\\ & 
	du_n=\sum_{\abs{\alpha}\leq m_F}D^\alpha F_{n}^{(\alpha)}\,dt
	+ \sum_{\abs{\alpha}\leq m_G}D^\alpha G_{n}^{(\alpha)}\,dW
	\quad \text{in $\Dp(\R^d)$, a.s.},
	\label{eq:interpol-weak-eqn}
	\\ & 
	\Ex\int_0^T \int_{\R^d}\abs{F_{n}^{(\alpha)}(t,x)}
	\chi(x)\dx\dt \lesssim 1, 
	\quad \forall \abs{\alpha}\le m_F, 
	\,\, m_F\leq m.
	\label{eq:Fan-bound}
	\\ & 
	\Ex \int_0^T \int_{\R^d}
	\abs{G_{n}^{(\alpha)}(t,x)}^2
	\chi(x)\dx\dt \lesssim 1, 
	\quad \forall \abs{\alpha}\le m_G, 
	\,\, m_G\leq m.
	\label{eq:Gan-bound}
\end{align}
Suppose the spatial compactness 
condition \eqref{eq:fracBVx-ass1} holds. 
Then, for any $\delta\in (0,T)$,
\begin{equation}\label{eq:un-temporal-L1}
	\Ex \sup_{\tau\in (0,\delta)}
	\int_0^{T-\delta}\int_{\R^d} 
	\abs{u_n(t+\tau,x)-u_n(t,x)}
	\chi(x)\dx\dt \leq \rho^t(\delta),
\end{equation}
where $\rho^t:[0,\infty)\to [0,\infty)$ is an 
increasing function that is continuous at $0$ 
with $\rho^t(0)=0$ ($\rho^t$ is independent of $n$ but may 
depend on $T,\chi$), see also \eqref{eq:rho-t-def}.
\end{proposition}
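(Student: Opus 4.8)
\emph{Proof strategy.} I would run a stochastic version of Kru\v{z}kov's interpolation. Introduce an auxiliary spatial mollification scale $\nu\in(0,1)$, to be optimised against $\delta$ at the very end, write $\bjn$ for the member of the given kernel family at scale $\nu$, and split the temporal increment as
\[
u_n(t+\tau,x)-u_n(t,x)
= \bigl(u_n-\bjn\star u_n\bigr)(t+\tau,x)
+ \bigl(\bjn\star u_n(t+\tau,x)-\bjn\star u_n(t,x)\bigr)
+ \bigl(\bjn\star u_n-u_n\bigr)(t,x).
\]
Multiplying by $\chi$, integrating in $x$, taking $\sup_{\tau\in(0,\delta)}$ and $\int_0^{T-\delta}\dt$, and taking expectations, the first and third pieces are controlled by the spatial regularity alone: since $\int_0^{T-\delta}\norm{(u_n-\bjn\star u_n)(t+\tau,\cdot)}_{L^1(\chi dx)}\dt\le\int_0^T\norm{(u_n-\bjn\star u_n)(s,\cdot)}_{L^1(\chi dx)}\ds$ uniformly in $\tau\in(0,\delta)$ (as $\tau,\,\tau\le\delta$ keep $s=t+\tau$ inside $(0,T)$), one estimates $\norm{(u_n-\bjn\star u_n)(s,\cdot)}_{L^1(\chi dx)}$ exactly as the term ``$B$'' in the proof of Proposition \ref{prop:fracBVx}, so that by \eqref{eq:weight-property} and \eqref{eq:fracBVx-ass1} (applied at scale $\nu/2$) the joint contribution of these two pieces is $\lesssim\rho^x(\nu)$. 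The hypotheses on $\chi$ guarantee the validity of all the weighted convolution estimates used below.

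For the middle piece I would convolve the distributional identity \eqref{eq:interpol-weak-eqn} with $\bjn(x-\cdot)$, using $D^\alpha(\bjn\star\,\cdot\,)=(D^\alpha\bjn)\star\,\cdot\,$, and integrate in time to obtain, almost surely and for each $x$,
\[
\bjn\star u_n(t+\tau,x)-\bjn\star u_n(t,x)
=\sum_{\abs{\alpha}\le m_F}\int_t^{t+\tau}\!\!\bigl((D^\alpha\bjn)\star F_n^{(\alpha)}(s,\cdot)\bigr)(x)\ds
+\sum_{\abs{\alpha}\le m_G}\int_t^{t+\tau}\!\!\bigl((D^\alpha\bjn)\star G_n^{(\alpha)}(s,\cdot)\bigr)(x)\dW(s).
\]
For the drift sum, bounding $\sup_{\tau\in(0,\delta)}\absb{\int_t^{t+\tau}(\dott)\ds}\le\int_t^{t+\delta}\abs{\dott}\ds$, then using $\norm{D^\alpha\bjn}_{L^1(\R^d)}\lesssim\nu^{-\abs{\alpha}}$ together with Young's inequality and \eqref{eq:weight-property} (the kernels having $\nu$-independent support, so $\chi(x-y)\lesssim\chi(x)$ on that support) yields $\norm{(D^\alpha\bjn)\star F_n^{(\alpha)}(s,\cdot)}_{L^1(\chi dx)}\lesssim\nu^{-\abs{\alpha}}\norm{F_n^{(\alpha)}(s,\cdot)}_{L^1(\chi dx)}$; integrating in $s$, then in $t$ (Fubini produces a factor $\delta$), and invoking \eqref{eq:Fan-bound}, the drift contribution is $\lesssim\delta\,\nu^{-m_F}$.

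For the stochastic sum I would first move $\sup_\tau$ and $\Ex$ inside the $\chi\,dx\,dt$-integration ($\sup$ of an integral $\le$ integral of the $\sup$, then Tonelli). For fixed $(t,x)$ the process $\tau\mapsto\int_t^{t+\tau}g_n(s,x)\dW(s)$, with $g_n:=\sum_{\abs\alpha\le m_G}(D^\alpha\bjn)\star G_n^{(\alpha)}$, is a martingale starting at $0$, so Doob's maximal inequality and the It\^o isometry give $\Ex\sup_{\tau\in(0,\delta)}\absb{\int_t^{t+\tau}g_n(s,x)\dW(s)}\lesssim\bigl(\Ex\int_t^{t+\delta}\abs{g_n(s,x)}^2\ds\bigr)^{1/2}$. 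It then remains to integrate this square root against the \emph{finite} measure $\chi\,dx\,dt$ on $(0,T-\delta)\times\R^d$; Cauchy--Schwarz in $(t,x)$ removes the square root at the cost of $(T\norm{\chi}_{L^1(\R^d)})^{1/2}$, and then Fubini in $(s,t)$ (a further factor $\delta$), Cauchy--Schwarz in the convolution ($\abs{(D^\alpha\bjn)\star G}^2\le\norm{D^\alpha\bjn}_{L^1}\,(\abs{D^\alpha\bjn}\star\abs{G}^2)$), \eqref{eq:weight-property}, and \eqref{eq:Gan-bound} bound the stochastic contribution by $\lesssim(\delta\,\nu^{-2m_G})^{1/2}=\delta^{1/2}\nu^{-m_G}$. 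Collecting the three estimates gives, for every $\nu\in(0,1)$,
\[
\Ex\sup_{\tau\in(0,\delta)}\int_0^{T-\delta}\int_{\R^d}\abs{u_n(t+\tau,x)-u_n(t,x)}\chi(x)\dx\dt
\le C\bigl(\rho^x(\nu)+\delta\,\nu^{-m_F}+\delta^{1/2}\nu^{-m_G}\bigr),
\]
with $C=C(\chi,T,m)$ independent of $n$. Choosing $\nu=\nu(\delta)\downarrow0$ slowly enough that $\delta\,\nu^{-m_F}\to0$ and $\delta^{1/2}\nu^{-m_G}\to0$ (e.g.\ $\nu=\delta^{a}$ with $0<a<\min\{1/m_F,1/(2m_G)\}$ when $m_F,m_G\ge1$, with the obvious simplifications otherwise) and setting $\rho^t(\delta)$ equal to the resulting right-hand side produces an increasing function, continuous at $0$ with $\rho^t(0)=0$; this is \eqref{eq:rho-t-def}.

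The step I expect to be the main obstacle is the stochastic term: because of the inner $\sup_\tau$ one must estimate the \emph{maximal function} of a stochastic convolution rather than a single increment, and Doob's inequality only yields an $L^2$-type control of that maximal function, so converting it into the $L^1(\chi\dx)$-type bound required here forces a Cauchy--Schwarz, affordable precisely because $\chi\in L^1(\R^d)$. A secondary, routine point is to justify the pointwise-in-$x$ It\^o representation displayed above --- either by taking the auxiliary kernel $\bjn$ smooth (so that $\bjn(x-\cdot)$ is an admissible test function in \eqref{eq:interpol-weak-eqn}), or by an additional mollification in $x$ --- given only the distributional form of the equation and the integrability bounds \eqref{eq:Fan-bound}--\eqref{eq:Gan-bound}.
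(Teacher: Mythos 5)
Your argument is correct, and it reaches exactly the paper's three-term bound $C_1\rho^x(\nu)+C_2\delta\nu^{-m_F}+C_3\delta^{1/2}\nu^{-m_G}$, but by a genuinely different decomposition. The paper runs Kru\v{z}kov's interpolation in its dual form: it never mollifies $u_n$, but instead mollifies the \emph{sign} of the increment $d_n=u_n(t+\tau,\cdot)-u_n(t,\cdot)$, producing a $W^{m,\infty}$ function $v_{n,\nu}$ with $\norm{D^\alpha v_{n,\nu}}_{L^\infty}\lesssim\nu^{-\abs{\alpha}}$; the elementary inequality $\absb{\abs{a}-a\sgn(b)}\leq 2\abs{a-b}$ together with \eqref{eq:fracBVx-ass1} replaces $\int\abs{d_n}\chi$ by $\int d_n\,v_{n,\nu}\chi$ up to $O(\rho^x(\nu))$, and then $v_{n,\nu}\chi$ is used as a (frozen-in-$t$) test function in the weak form of \eqref{eq:interpol-weak-eqn}, the stochastic term being handled by the Hilbert-space-valued BDG inequality in $L^2(\chi dx)$ plus Cauchy--Schwarz (which is where $\chi\in L^1$ enters). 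You instead mollify the solution, split the increment into two commutation errors plus the increment of $\bjn\star u_n$, convolve the equation with the kernel, and treat the noise by scalar Doob/It\^o pointwise in $(t,x)$ followed by Cauchy--Schwarz against the finite measure $\chi\,dx\,dt$ --- the same place where $\chi\in L^1$ is used, with the same outcome $\delta^{1/2}\nu^{-m_G}$. Your route is arguably more elementary (no sign-function trick, no vector-valued BDG, and the sup in $\tau$ is absorbed by pointwise maximal functions), at the mild cost of needing the kernel itself to be $m$ times differentiable so that $\bjn(x-\cdot)$ is admissible in \eqref{eq:interpol-weak-eqn} (you flag this correctly; the hypotheses on $\seq{\bjn}$, or a preliminary smoothing, cover it). Two cosmetic points: your appeal to \eqref{eq:fracBVx-ass1} ``at scale $\nu/2$'' uses the rescaling identity of a Friedrichs-type family (the paper sidesteps this by building the factor $1/2$ into the definition of $v_{n,\nu}$), so either work with a rescaled family or insert the same dilation in your mollification; and your explicit choice $\nu=\delta^a$ gives a valid modulus, whereas the paper's \eqref{eq:rho-t-def} takes the infimum over $\nu$ --- equivalent for the statement, slightly sharper as a bound.
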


\begin{proof}
Recall that $u_n$ satisfies the $L^1$ bound \eqref{eq:weighted-L1}.
For $\nu>0$, set
\begin{equation}\label{eq:vdelta-def}
	v_{n,\nu}(t,x)=\int_{\R^d}\frac{1}{2^d}
	\bjn\left(\frac{x-y}{2}\right)
	\signb{d_n(t,y)}\dy,
\end{equation}
where $d_n(t,x):=u_n(t+\tau,x)-u_n(t,x)$, for 
$\tau\in (0,\delta)$, $\delta\in (0,T)$.
We have
\begin{equation}\label{eq:v-der-estimate}
	\norm{D^{\alpha}v_{n,\nu}(t,\cdot)}_{L^\infty(\R^d)}
	\lesssim 1/\nu^{\abs{\alpha}}, 
	\quad \abs{\alpha}\leq m,
\end{equation}
where the right-hand side is independent of $n,t$. 
In other words, we have a.s.~that 
$v_{n,\nu}\in L^\infty(0,T;W^{m,\infty}(\R^d))$.

Note that, by $\absb{\abs{a}-a\sgn(b)}
\leq 2 \abs{a-b}$ $\forall a,b\in \R$,
\begin{align*}
	& \absb{\abs{d_n(t,x)}-d_n(t,x)v_{n,\nu}(t,x)}
	\leq \frac{1}{2^{d-1}}
	\int_{\R^d} \bjn\left(\frac{x-y}{2}\right)
	\abs{d_n(t,x)-d_n(t,y)}\dy,
\end{align*}
As a result,
\begin{align*}
	&\int_0^{T-\tau} \int_{\R^d} 
	\absb{\abs{d_n(t,x)}-d_n(t,x)v_{n,\nu}(t,x)} \chi(x) \dx\dt
	\\ & \quad \leq 
	\frac{1}{2^{d-1}} \int_0^{T-\tau} \int_{\R^d}\int_{\R^d}
	\bjn\left(\frac{x-y}{2}\right) 
	\abs{d_n(t,x)-d_n(t,y)}\chi(x)\dx\dy\dt
	\\ & \quad = 
	2\int_0^{T-\tau} \int_{\R^d}\int_{\R^d}
	\bjn(z) \abs{d_n(t,\tx+z)-d_n(t,\tx-z)}
	\chi(\tx+z)\dtx\dz\dt.
\end{align*}
Utilising \eqref{eq:weight-property} 
(and the bounded support of $\bjn$),
$\bjn(z) \chi(\tx+z)\lesssim \bjn(z) \chi(\tx)$. 
Hence
\begin{equation}\label{eq:d-abs-approx}
	\Ex\int_0^{T-\tau} \int_{\R^d} 
	\absb{\abs{d_n(t,x)}
	-d_n(t,x)v_{n,\nu}(t,x)} \chi(x) \dx\dt
	\overset{\eqref{eq:fracBVx-ass1}}{\lesssim} \rho^x(\nu).
\end{equation}

The weak form of the SPDE \eqref{eq:interpol-weak-eqn} implies
\begin{align}
	& \notag 
	\abs{\int_{\R^d} 
	d_n(t,x)v(t,x)\,\chi(x)\dx}
	\leq \sum_{\abs{\alpha}\le m_F} \abs{\int_t^{t+\tau} \int_{\R^d}
	F_{n}^{(\alpha)}(s,x) \cdot D^\alpha 
	\bigl(v(t,x)\chi(x)\bigr) \dx\ds}
	\\ & \qquad\qquad
	+ \sum_{\abs{\alpha}\le m_G}
	\abs{\int_{\R^d}\left(\int_t^{t+\tau} 
	G_{n}^{(\alpha)}(s,x)\, dW(s)\right)
	D^{\alpha}\bigl(v(t,x) \, \chi(x)\bigr)\dx},
	\label{eq:weak-form-SPDE}
\end{align}
for all $v \in L^\infty(0,T;W^{m,\infty}(\R^d))$. 
Combining \eqref{eq:weak-form-SPDE} 
and \eqref{eq:d-abs-approx} yields
\begin{align*}
	I & :=\Ex\int_0^{T-\delta}
	\sup_{\tau\in (0,\delta)}\int_{\R^d} 
	\abs{d_n(t,x)}\, \chi(x)\dx\dt
	\leq C_1\rho^x(\nu)+A(\delta,\nu)
	+B(\delta,\nu),
\end{align*}
where
\begin{align*}
	& A(\delta,\nu)=\sum_{\abs{\alpha}\le m_F}  
	\int_0^{T-\delta} 
	\Ex\int_t^{t+\delta} \int_{\R^d}
	\abs{F_{n}^{(\alpha)}(s,x)} 
	\abs{D^\alpha \bigl(v_{n,\nu}(t,x)\chi(x)\bigr)}
	\dx\ds\dt,
	\\ &
	B(\delta,\nu)= \sum_{\abs{\alpha}\le m_G}
	\int_0^{T-\delta}
	\Ex \sup_{\tau\in (0,\delta)} 
	\abs{M^{(\alpha)}(\tau;t)}\dt,
	\\ & 
	M^{(\alpha)}(\tau;t)=\int_{\R^d}
	\left(\int_t^{t+\tau} 
	G_{n}^{(\alpha)}(s,x) \dW(s)\right)
	D^\alpha\bigl(v_{n,\nu}(t,x)\,\chi(x)\bigr) \dx,
\end{align*}
and $v_{n,\nu}$ is defined in \eqref{eq:vdelta-def}. 
By assumption, the weight function $\chi$ belongs to $W^{m,\infty}$ 
and satisfies $\abs{D^\alpha \chi (x)}\lesssim \chi(x)$ for 
$\abs{\alpha}\leq m$. Thus, by \eqref{eq:v-der-estimate},
\begin{equation}\label{eq:Dalpha-v-chi}
	\abs{D^\alpha \bigl(v_{n,\nu}(t,x)\chi(x)\bigr)}
	\lesssim \frac{\chi(x)}{\nu^{\abs{\alpha}}},
\end{equation}
and so
\begin{align*}
	& I\leq C_1\rho^x(\nu)
	+\frac{\delta}{\nu^{m_F}}\sum_{\abs{\alpha}\le m_F} 
	\Ex\int_0^T \int_{\R^d}
	\abs{F_{n}^{(\alpha)}(t,x)}\chi(x)\dx\dt
	+B(\delta,\nu)
	\\ & \quad 
	\overset{\eqref{eq:Fan-bound}}{\leq} 
	C_1\rho^x(\nu)
	+C_2\frac{\delta}{\nu^{m_F}}+B(\delta,\nu), 
\end{align*}
for some $(n,\delta,\nu)$-independent constants $C_1,C_2$. 

Next, using first \eqref{eq:Dalpha-v-chi} and 
then the Cauchy--Schwarz inequality, it follows that
$\abs{M^{(\alpha)}(\tau;t)} \lesssim_\chi 
\frac{1}{\nu^{m_G}}\norm{\int_t^{t+\tau} 
G_{n}^{(\alpha)}(s) \dW(s)}_{L^2(\chi dx)}$. 
By the (Hilbert-space valued) BDG 
inequality \cite[page 174]{DaPrato:2014aa},
and again the Cauchy--Schwarz inequality,
\begin{align*}
	& B(\delta,\nu)
	\lesssim_{\chi,T} \frac{1}{\nu^{m_G}}\!\!
	\sum_{\abs{\alpha}\le m_G}\!\!
	\left(\Ex \int_0^{T} \int_t^{t+\delta} 
	\int_{\R^d}\abs{G_{n}^{(\alpha)}(s,x)}^2 
	\,\chi(x) \dx \ds \dt \right)^{1/2} 
	\overset{\eqref{eq:Gan-bound}}{\lesssim}
	\frac{\delta^{1/2}}{\nu^{m_G}},
\end{align*}
so that $I\leq C_1\rho^x(\nu)
+C_2\frac{\delta}{\nu^{m_F}}+C_3 \frac{\delta^{1/2}}{\nu^{m_G}}$. 
The claim \eqref{eq:un-temporal-L1} follows by setting 
\begin{equation}\label{eq:rho-t-def}
	\rho^t(\delta)=\inf\limits_{\nu>0} 
	\left(C_1\rho^x(\nu)+C_2\frac{\delta}{\nu^{m_F}}
	+C_3 \frac{\delta^{1/2}}{\nu^{m_G}}\right).
\end{equation} 
\end{proof}

\section{Quantitative compensated compactness}
\label{sec:CC}

Consider the viscosity approximation $u_n$ of \eqref{eq:scl} 
with $d=1$, $R\equiv 0$, and $f=f(u)$. 
According to \cite{Feng:2008ul}, see also 
\cite{Karlsen:2015ab} and \cite[Remark 5.9]{Frid:2021us}, 
there exists a unique solution $\ue$---continuous in $t$ and 
smooth in $x$---of the SPDEs
\begin{align}
	& d\ue +\px f(\ue)\, dt
	=\eps \pxx \ue\,dt +\sigma(x,\ue)\,dW, 
	\label{eq:VV-1D} \\ & 
	d\eta(\ue)+\px q(\ue)\,dt
	=\eps\pxx \eta(\ue)\,dt-\mu_\eps
	\label{eq:VV-entropy-ineq}
	\\ & \qquad \qquad
	+\frac12 \eta''(\ue)\sigma^2(x,\ue)\,dt 
	+\eta'(\ue)\sigma(x,\ue)\,dW, 
	\notag
\end{align}
where $\eta\in C^2(\R)$, $q'=\eta'f'$, and 
$\mu_\eps := \eta''(\ue) \eps \abs{\px\ue}^2$. 
Moreover, $\forall p,r\in [2,\infty)$ and 
for any weight $\chi\in \cW$,
\begin{equation}\label{eq:stoch-ue-est1}
	\begin{split}
		& \Ex\norm{\ue}_{L^\infty(0,T;L^p(\chi dx))}^r
		\leq C_\chi, \quad
		\Ex\abs{\int_0^T\int_{\R} 
		\eps \abs{\px\ue}^2 \chi(x)\dx\dt}^r \leq C_\chi,
		\\ & \quad \text{and} \quad
		\Ex\abs{\int_0^T\int_{\R} 
		\chi\, d\abs{\mu_\eps}}^r
		\leq C_\chi.
	\end{split}
\end{equation}
The entropy balance \eqref{eq:VV-entropy-ineq} follows 
from the viscous SPDE \eqref{eq:VV-1D} and 
the spatial and temporal (It\^{o}) chain rules. 

The flux $f$ and entropy $\eta$ 
are assumed to satisfy the nonlinearity assumptions of 
the next lemma, which is a global version of 
\cite[Lemma 5.2]{Golse:2013aa} that 
will be utilised later. We refer to \cite{Golse:2013aa} 
for the proof.

\begin{lemma}\label{lem:nonlinear-flux-est-global}
Suppose $f,\eta$ are $C^1$ functions
such that
\begin{align}
	\label{eq:f-nonlin-ass}
	& f'(v)-f'(w)\ge C_f\left(v-w\right)^{p_f},
	\quad -\infty\le w<v\le \infty, \,\, 
	p_f\ge 1,
	\\ &
	\label{eq:eta-nonlin-ass}
	\eta'(v)-\eta'(w)\ge C_\eta\left(v-w\right)^{p_\eta},
	\quad -\infty\le w<v\le \infty, \,\, 
	p_\eta\ge 1,
\end{align}
for some constants $C_f, C_\eta>0$. 
Then, for all $v,w\in \R$,
$$
(w-v)\bigl(q(w)-q(v)\bigr)
-\bigl(\eta(w)-\eta(v)\bigr)\bigl(f(w)-f(v) \bigr) 
\geq C_{f,\eta}\abs{w-v}^{p_f+p_\eta+2},
$$
where $C_{f,\eta}=\frac{C_f C_\eta}
{\left(1+p_f+p_\eta\right)
\left(2+p_f+p_\eta\right)}$.
\end{lemma}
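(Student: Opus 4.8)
The plan is to reduce the inequality to a covariance‑type (Chebyshev) rearrangement and then compute an elementary double integral. Since the left‑hand side is symmetric under interchanging $v$ and $w$ — both $(w-v)\bigl(q(w)-q(v)\bigr)$ and $\bigl(\eta(w)-\eta(v)\bigr)\bigl(f(w)-f(v)\bigr)$ are invariant under this swap — and the case $v=w$ is trivial, I may assume $w>v$ and write $a:=v<b:=w$, so that $\abs{w-v}=b-a$. Throughout, $C^1$ regularity of $f,\eta$ (hence continuity of $f',\eta'$) is all that is used.

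First I would express every difference by the fundamental theorem of calculus: $f(b)-f(a)=\int_a^b f'(s)\,ds$, $\eta(b)-\eta(a)=\int_a^b\eta'(s)\,ds$, and, using $q'=\eta'f'$, $q(b)-q(a)=\int_a^b\eta'(s)f'(s)\,ds$. Substituting these into the left‑hand side $D$ and expanding the elementary identity
\begin{equation*}
(b-a)\int_a^b\eta'(s)f'(s)\,ds-\Bigl(\int_a^b\eta'(s)\,ds\Bigr)\Bigl(\int_a^b f'(s)\,ds\Bigr)=\frac12\int_a^b\!\!\int_a^b\bigl(\eta'(s)-\eta'(t)\bigr)\bigl(f'(s)-f'(t)\bigr)\,ds\,dt,
\end{equation*}
I obtain the representation $D=\tfrac12\int_a^b\int_a^b\bigl(\eta'(s)-\eta'(t)\bigr)\bigl(f'(s)-f'(t)\bigr)\,ds\,dt$.

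Next comes the pointwise bound on the integrand. For $s>t$ the hypotheses \eqref{eq:f-nonlin-ass}--\eqref{eq:eta-nonlin-ass} give $\eta'(s)-\eta'(t)\ge C_\eta(s-t)^{p_\eta}>0$ and $f'(s)-f'(t)\ge C_f(s-t)^{p_f}>0$; for $s<t$ both factors are negative with the analogous bounds on their moduli; in either case $\bigl(\eta'(s)-\eta'(t)\bigr)\bigl(f'(s)-f'(t)\bigr)\ge C_fC_\eta\,\abs{s-t}^{p_f+p_\eta}$. Hence $D\ge\tfrac{C_fC_\eta}{2}\int_a^b\int_a^b\abs{s-t}^{p_f+p_\eta}\,ds\,dt$. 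Finally I would evaluate this last integral: with $m:=p_f+p_\eta$, symmetry in $(s,t)$ gives $\int_a^b\int_a^b\abs{s-t}^m\,ds\,dt=2\int_a^b\frac{(s-a)^{m+1}}{m+1}\,ds=\frac{2(b-a)^{m+2}}{(m+1)(m+2)}$, and combining yields $D\ge\frac{C_fC_\eta}{(1+p_f+p_\eta)(2+p_f+p_\eta)}(b-a)^{p_f+p_\eta+2}$, i.e.\ the claim, since $b-a=\abs{w-v}$.

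There is essentially no deep obstacle here; this is precisely the argument of \cite[Lemma 5.2]{Golse:2013aa}, and the only things requiring care are the algebraic verification of the covariance‑type identity in the second step and the sign bookkeeping in the pointwise bound (treating $s>t$ and $s<t$ symmetrically so that the product is controlled by $\abs{s-t}^{p_f+p_\eta}$). The ``global'' feature — that $v,w$ may range over all of $\R$, with infinite endpoints allowed in the hypotheses — costs nothing, since the conclusion only concerns finite $v,w$ and the estimates above hold on every bounded interval $[a,b]$.
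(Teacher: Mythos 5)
Your proof is correct and complete: the reduction by symmetry, the covariance identity, the pointwise bound $\bigl(\eta'(s)-\eta'(t)\bigr)\bigl(f'(s)-f'(t)\bigr)\ge C_fC_\eta\abs{s-t}^{p_f+p_\eta}$, and the evaluation of the double integral all check out, and they reproduce exactly the constant $C_{f,\eta}$ in the statement. The paper itself omits the proof and defers to \cite[Lemma 5.2]{Golse:2013aa}, whose argument is precisely the one you give, so there is nothing further to compare.
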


\begin{remark}\label{rem:f-convex}
Suppose $f\in W^{2,\infty}(\R)$ is such that 
$\abs{f''(\xi)}\ge c>0$ for a.e.~$\xi\in\R$. 
This is a ``quantitive" version of the classical 
nonlinearity condition $f''(\xi)\neq 0$ 
for a.e.~$\xi\in \R$ \cite{Lu:2003lr,Tartar:1983ul}.  
Then, choosing $\eta=f$ as an entropy flux, it 
follows that $(w-v)\bigl(q(w)-q(v)\bigr)
-\bigl(f(w)-f(v) \bigr)^2\geq \frac{c^2}{8}\abs{w-v}^{4}$.
\end{remark}

\begin{remark}
Compared to \cite[Lemma 5.2]{Golse:2013aa}, 
the assumptions in Lemma \ref{lem:nonlinear-flux-est-global}
are global, since we do not know that $\ue$ is 
bounded in $L^\infty_{\omega,t,x}$. 
In what follows, we always assume 
$\eta\in W^{2,\infty}_{\operatorname{loc}}(\R)$ is 
such that $\eta,\eta',\eta''$ are at most polynominally 
growing (including $\eta=f)$: for some $p_0\ge 2$,
\begin{equation}\label{eq:eta-pol}
	\abs{\eta(u)}\lesssim 1+\abs{u}^{p_0}, \quad 
	\abs{\eta'(u)}\lesssim 1+\abs{u}^{p_0-1}, \quad
	\abs{\eta''(u)}\lesssim 1+\abs{u}^{p_0-2}.
\end{equation} 
To ensure the validity of the a priori 
estimates in \eqref{eq:stoch-ue-est1}, 
we assume that 
\begin{equation}\label{eq:sigma-lin}
	\abs{\sigma(x,u)}\lesssim 1+\abs{u}.
\end{equation}
\end{remark}

The main result is the following spatial 
compensated compactness estimate:

\begin{theorem}[quantitative compactness estimate, 
viscosity approximation]
\label{thm:quant-CC-est}
Fix a weight $\chi\in \cW$. Let $u_n$ be a classical 
solution to the viscous SPDEs 
\eqref{eq:VV-1D}, \eqref{eq:VV-entropy-ineq} with 
$\eps=\eps_n\to 0$ as $n\to \infty$. Suppose 
\eqref{eq:f-nonlin-ass}, \eqref{eq:eta-nonlin-ass}, 
\eqref{eq:eta-pol}, and \eqref{eq:sigma-lin} hold.
Set $\mu:=1-\frac{1}{p}$, $p\in [2,\infty)$. 
Then, for any $z\in \R$ with $\abs{z}<1$,
\begin{equation}\label{eq:stoch-CC-est}
	\Ex \int_0^T \int_{\R} 
	\abs{u_n(t,x+z)-u_n(t,x)}^{p_f+p_\eta+2}
	\,\chi(x)\dx\dt\lesssim_{\chi,p}\abs{z}^{\mu}.
\end{equation}
\end{theorem}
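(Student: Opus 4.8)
The plan is to combine the pointwise coercivity of Lemma~\ref{lem:nonlinear-flux-est-global} with a stochastic ``div--curl'' computation in the spirit of the deterministic argument of \cite{Golse:2013aa}. Write $u:=u_n$, $\eps:=\eps_n$ and $u^z(t,x):=u(t,x+z)$. By Lemma~\ref{lem:nonlinear-flux-est-global} the integrand in \eqref{eq:stoch-CC-est} is dominated by the compensated-compactness bilinear form
\[
\mathcal{B}:=\bigl(u^z-u\bigr)\bigl(q(u^z)-q(u)\bigr)-\bigl(\eta(u^z)-\eta(u)\bigr)\bigl(f(u^z)-f(u)\bigr),
\]
the Wronskian-type quantity attached to the two space--time fields $(u,f(u))$ and $(\eta(u),q(u))$, so it suffices to show $\Ex\int_0^T\!\int_\R \mathcal{B}\,\chi\,\dx\,\dt\lesssim_{\chi,p}\abs{z}^{\mu}$. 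Set $D:=u^z-u$, $E:=\eta(u^z)-\eta(u)$, $F:=f(u^z)-f(u)$, $Q:=q(u^z)-q(u)$, and introduce the averaged primitive $\Psi(t,x):=z\int_0^1\eta(u(t,x+sz))\ds$, which satisfies $\px\Psi=E$. Anchoring the primitive this way, rather than at a fixed point, lets it inherit a clean It\^o equation directly from \eqref{eq:VV-entropy-ineq}, with drift of the form $-Q\,\dt+\eps\px E\,\dt-\tilde\mu_\eps\,\dt+(\text{It\^o correction})\,\dt$, where $\tilde\mu_\eps(t,x):=z\int_0^1\mu_\eps(t,x+sz)\ds$, together with a $\dW$-term $z\int_0^1\eta'(u)\sigma\ds$.

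I would then write $\int_\R\mathcal{B}\,\chi\,\dx=\int_\R DQ\,\chi\,\dx-\int_\R(\px\Psi)F\,\chi\,\dx$, integrate the second term by parts in $x$, substitute $\d D+\px F\,\dt=\eps\pxx D\,\dt+[\sigma(x+z,u^z)-\sigma(x,u)]\,\dW$ (the difference of two copies of \eqref{eq:VV-1D}), and apply the It\^o product rule to the resulting $\int_\R\Psi\,\d D\,\chi\,\dx$. The crucial cancellation is the div--curl one: the term $\int_0^T\!\int_\R DQ\,\chi\,\dx\,\dt$ originating from $\mathcal{B}$ is exactly cancelled by the $-\int_0^T\!\int_\R DQ\,\chi\,\dx\,\dt$ that appears when $D$ meets the flux term $-Q\,\dt$ in $\d\Psi$. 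After integrating in $t$ over $(0,T)$ and taking expectations---the $\dW$-stochastic integrals having mean zero, once $L^2$-integrability of their integrands is checked via \eqref{eq:stoch-ue-est1}, \eqref{eq:eta-pol}, \eqref{eq:sigma-lin}---what survives is a short list: (i) a time-boundary term $\Ex\int_\R\bigl[(\Psi D)(T,x)-(\Psi D)(0,x)\bigr]\chi(x)\,\dx$; (ii) the entropy-production term $-\Ex\int_0^T\!\int_\R D\,\tilde\mu_\eps\,\chi\,\dx\,\dt$; (iii) It\^o-correction terms built from $\tfrac12\eta''(u)\sigma^2$-differences and from the covariation of $\Psi$ and $D$, each carrying an explicit factor $\abs{z}$ (from $\Psi$, from $\tilde\mu_\eps$, or from $z\int_0^1\sigma\ds$); (iv) viscous terms which, after one further integration by parts, reduce to $\eps\int_\R\chi(D\px E-E\px D)\,\dx$ plus $\chi'$-weighted remainders; (v) the term $\int_\R\Psi F\chi'\,\dx$.

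Finally I would estimate each surviving term. For (i), (ii) and (v) the basic tool is the primitive bound (H\"older/Jensen together with \eqref{eq:eta-pol}),
\[
\abs{\Psi(t,x)}\le z\int_0^1\abs{\eta(u(t,x+sz))}\ds\lesssim \abs{z}^{1-1/p}\Bigl(\int_x^{x+z}\bigl(1+\abs{u(t,y)}^{p_0p}\bigr)\,\dy\Bigr)^{1/p},
\]
combined with the weight property \eqref{eq:weight-property}, the moment bounds \eqref{eq:stoch-ue-est1}, and $u_0\in L^\infty$ for the $t=0$ contribution; it is exactly here---in replacing an $L^\infty_x$-bound on $u_n$ by an $L^p_x$-bound and pairing the resulting primitive against the dissipation---that the exponent $\mu=1-1/p$ arises rather than $\mu=1$. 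For (iii) the explicit $\abs{z}$ factor already gives $O(\abs{z})\lesssim\abs{z}^\mu$. For (iv), the crude bound $\abs{D}\le\abs{u^z}+\abs{u}$ is too lossy (it would leave an unwanted $O(\eps)$ term): instead I would use $\abs{u^z-u}\le\abs{z}^{1/2}\bigl(\int_0^z\abs{\px u(\dott+r)}^2\,\d r\bigr)^{1/2}$ to extract a power of $\abs{z}$ from the increment, after which the surviving $\eps\int_\R\abs{\px u}^2\chi$ is exactly balanced by the dissipation bound $\Ex\int_0^T\!\int_\R\eps\abs{\px u}^2\chi\,\dx\,\dt\lesssim 1$ of \eqref{eq:stoch-ue-est1} (and, when powers of $u$ or $\eta''$ multiply the gradient, by the higher-order weighted energy estimates subsumed in \eqref{eq:stoch-ue-est1}); this renders (iv) $O(\abs{z})$ uniformly in $\eps_n$. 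Summing all contributions gives $\Ex\int_0^T\!\int_\R\mathcal{B}\,\chi\,\dx\,\dt\lesssim_{\chi,p}\abs{z}^\mu$, and \eqref{eq:stoch-CC-est} follows. I expect the principal obstacle to be precisely the weighted bookkeeping in (ii) and (iv): keeping every bound uniform in $\eps_n$ while controlling the entropy-production and viscous contributions of the merely $L^p$-bounded $u_n$ against the dissipation measure $\mu_{\eps_n}$.
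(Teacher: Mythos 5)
Your proposal is correct in substance and is, at bottom, the same argument as the paper's: both are stochastic adaptations of the Golse--Perthame interaction (``div--curl'') identity for the pair $(u,f(u))$, $(\eta(u),q(u))$, combined with the coercivity of Lemma~\ref{lem:nonlinear-flux-est-global} and primitive bounds of size $\abs{z}^{1-1/p}$ drawn from the $L^p(\chi dx)$ moments in \eqref{eq:stoch-ue-est1}; indeed the paper's antiderivative $\cD(t,x)=\int_x^\infty \chi\,\Delta_h\eta(\ue)\,dy$ collapses, modulo weight commutators, to exactly your segment primitive $\Psi$. The genuine differences are organizational. First, you anchor the primitive locally on $[x,x+z]$ and apply the It\^o product rule to $\int \Psi\, D\,\chi\,dx$ directly, instead of invoking the double-integral interaction functional $I(t)=\iint_{x<y}A(t,x)D(t,y)\,dx\,dy$ of Lemma~\ref{lem:stoch-interact}; this avoids the decay-at-infinity hypotheses of that lemma and is arguably cleaner. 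Second, you put the primitive on the entropy side, so the raw increment $D=u(\cdot,\cdot+z)-u$ is paired with the segment averages of $\mu_\eps$ and of the It\^o correction $\tfrac12\eta''\sigma^2$, whereas the paper antidifferentiates the $u$-increment, whose primitive is bounded by $\abs{h}^{1-1/p}\norm{\ue}_{L^\infty(0,T;L^p(\chi dx))}$ uniformly in the base point, so that $\mu_\eps$ enters only through its full-weight mass $\int\chi\,d\abs{\mu_\eps}$ from \eqref{eq:stoch-ue-est1}. Your arrangement still closes, but only after the weighted bookkeeping you yourself flag: H\"older over the segment leaves fractional powers $\chi^{1-1/p}$ against $\abs{\mu_\eps}$ and against $\eta''\sigma^2$, which is repaired by running the identity with $\chi^2$ (as the paper effectively does, its left-hand side carrying $\chi^2$) and using boundedness of the weight; and your viscous estimate calls on $\eps\int\abs{\eta'(\ue)}^2\abs{\px\ue}^2\chi\,dx$, which is not literally among the bounds \eqref{eq:stoch-ue-est1} --- it follows from the same It\^o argument with an auxiliary entropy, or can be avoided by the extra H\"older in the segment variable that the paper uses for its $J_{2,2}$-term (at the harmless cost of the exponent $1-\tfrac{1}{2p}$, still $\le \mu$ for $\abs{z}<1$). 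With those two repairs, each of your surviving terms matches one of the paper's $J_1$--$J_8$, and the conclusion \eqref{eq:stoch-CC-est} follows as you describe.
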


\begin{remark}
By H\"older's inequality, it follows 
from \eqref{eq:stoch-CC-est} that
\begin{equation}\label{eq:spatial-ass-standard}
	\Ex\int_0^T\int_{\R}
	\abs{u_n(t,x+z)-u_n(t,x)}\,\chi(x)\dx\dt
	\leq C\abs{z}^{\mu_x},
\end{equation}
where $\mu_x:=\frac{\mu}{p_f+p_\eta+2}$ and $C=C(T,\chi,p)$. 
Note carefully that this estimate appears to 
be slightly weaker than the spatial compactness 
estimate \eqref{eq:fracBVx-intro2} as there is 
a \, missing $\sup_{\abs{z}<\delta}$ inside the expectation operator. 
It is not immediately clear how to recover this supremum; 
the available maximal (martingale) inequalities apply 
only to the temporal variable $t$ and not an arbitrary parameter $z$. 
However, we can use Proposition \ref{prop:fracBVx} 
to recover the supremum by considering a standard 
Friedrichs mollifier $\seq{\bjd}_{\delta>0}$.
Observe that \eqref{eq:spatial-ass-standard} implies
\begin{equation}\label{eq:CC-estimate-tmp1}
	\begin{split}
		&\Ex\int_0^T\int_{\R}\int_{\R}
		\bjd(z)\abs{u_n(t,x+z)-u_n(t,x-z)}\,\chi(x)\dz\dx\dt
		\\ & \qquad 
		\leq 2C\int_{\R}\bjd(z)\abs{z}^{\mu_x}\, dz
		\leq 2C\delta^{\mu_x}.
	\end{split}
\end{equation}
Given \eqref{eq:spatial-ass-standard}, 
Proposition \ref{prop:fracBVx}---via the estimate 
\eqref{eq:CC-estimate-tmp1}---supplies 
the spatial estimate \eqref{eq:fracBVx-intro2} 
with $\sup_{\abs{z}<\delta}$ inside the 
expectation operator. Besides, by Proposition \ref{prop:fracBVt}, 
\eqref{eq:CC-estimate-tmp1} also implies 
the temporal compactness estimate \eqref{eq:fracBVt-intro1}. 

It is worth noting that these spatial and 
temporal estimates do not require the measure $\mu_\eps$ in 
the entropy balance equation \eqref{eq:VV-entropy-ineq} 
to be positive. This means that they can be 
used to analyze stochastic conservation laws 
with discontinuous ($BV$) flux, which 
will be discussed further in future work.
\end{remark}

\begin{remark}
Under the assumption of 
Remark \ref{rem:f-convex}, we have $p_f+p_\eta+2=4$; 
thus the $L^1_x$ compactness 
estimate \eqref{eq:fracBVx-intro2} holds with 
$\rho^x(\delta)=\delta^{\mu_x}$ and $\mu_x:=\frac14-\frac{1}{4p}$, 
for any $p\in [2,\infty)$. If $\norm{\ue}_{L^\infty_{\omega,t,x}}
\lesssim 1$, then we can improve 
this to $\mu_x=1/4$, which is 
consistent with \cite{Golse:2013aa}.
\end{remark}

\subsection{Stochastic interaction lemma}

Consider the two It\^{o} SPDEs
\begin{equation}\label{eq:SPDE-system}
	\begin{split}
		& dA + \px B\dt = C_A\dt + \sigma_A\dW,
		\\ &
		dD + \px E\dt = C_D\dt + \sigma_D\,dW,
	\end{split}
\end{equation}
which hold weakly in $x$, almost surely. 
Here $A=A(\omega,t,x),D=D (\omega,t,x)$ 
belong to $C([0,T];L^1(\R^d))$ a.s., satisfy
$\Ex \norm{\bigl(A,D\bigr)(t)}_{L^1(\R)}^2<\infty$ 
$\forall t\in [0,T]$, and $A,D\to 0$ as $\abs{x}\to 0$, for 
each fixed $(\omega,t)$. Besides, $\bigl(B,C_A,E,C_D\bigr)=
\bigl(B,C_A,E,C_D\bigr)(\omega,t,x)$ 
satisfy $\Ex \int_0^T \norm{\bigl(B,C_A,E,C_D\bigr)(t)}_{L^1(\R)}
\dx\dt<\infty$. The noise amplitudes $\bigl(\sigma_A,\sigma_D\bigr)
=\bigl(\sigma_A,\sigma_D\bigr)(\omega,t,x)$ satisfy $\Ex \int_0^T 
\norm{\bigl(\sigma_A(t),\sigma_D(t)\bigr)}_{L^p(\R)}^2\dt<\infty$ 
for $p=1,2$. 

The processes $A,D,\sigma_A,\sigma_D$ and $W$ live 
on the stochastic basis introduced in Section \ref{sec:intro}, 
and they are assumed to be appropriately measurable with 
respect to the filtration, so all 
relevant stochastic integrals are well-defined 
in the sense of It\^{o}.

In what follows, we need the spatial 
anti-derivatives of $A$ and $D$,
$$
\cA(t,y):=\int_{-\infty}^y A(t,y)\, dx,
\quad 
\cD(t,x):=\int_x^{\infty} D(t,y)\, dy,
$$
as well the spatial anti-derivatives 
of $\sigma_A$ and $\sigma_D$,
$$
\Sigma_A(t,y):=\int_{-\infty}^y \sigma_A(t,x)\dx,
\quad 
\Sigma_D(t,x):=\int_x^{\infty} \sigma_D(t,y)\dy.
$$

The following lemma is a stochastic adaptation of 
a result from Golse and Perthame \cite[Section 2]{Golse:2013aa}. 

\begin{lemma}[stochastic interaction identity]
\label{lem:stoch-interact}
For $t\in [0,T]$, define
$$
I(t)=\iint\limits_{x<y} A(t,x)D(t,y)\dx\dy.
$$
Then the following identity holds a.s.:
\begin{equation}\label{eq:x-interaction-stoch}
	\begin{split}
		&\int_0^T\int_{\R}
		\bigl(AE-DB \bigr)(t,x)\dx\dt
		\\ & \quad 
		=-\int_0^T\int_{\R} C_A(t,x)\cD(t,x)\dx\dt
		-\int_0^T\int_{\R} \cA(t,y)C_D(t,y)\dy\dt
		\\ & \quad \qquad
		-\int_0^T\int_{\R}
		\sigma_A(t,x)\cD(t,x)\dx\dW
		-\int_0^T\int_{\R} \cA(t,y)\sigma_D(t,y)\dy\dW
		\\ & \quad\qquad
		-\int_0^T I_\sigma(t)\,dt+I(T)-I(0),
	\end{split}
\end{equation}
where 
$I_\sigma(t):=\iint\limits_{x<y}
\sigma_A(t,x)\sigma_D(t,y)\dx\dy$ 
satisfies
\begin{equation}\label{eq:noise-interact-new}
	I_\sigma(t)=\int_{\R}\sigma_A(t,x)\Sigma_D(t,x)\dx
	=\int_{\R}\Sigma_A(t,y)\sigma_D(t,y)\dy.
\end{equation}
\end{lemma}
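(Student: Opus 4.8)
The identity \eqref{eq:x-interaction-stoch} is the stochastic counterpart of the classical ``interaction'' computation, where one differentiates the product functional $I(t)=\iint_{x<y}A(t,x)D(t,y)\dx\dy$ along the flow. The plan is to apply the It\^{o} product rule to $I(t)$, using the two SPDEs in \eqref{eq:SPDE-system} for $dA$ and $dD$. First I would justify, by a standard mollification/truncation argument (convolving in $x$ and cutting off the tail, which is legitimate because of the stated $L^1$ integrability of $A,D,B,E,C_A,C_D,\sigma_A,\sigma_D$ and the decay $A,D\to 0$ as $\abs{x}\to\infty$), that it suffices to prove the identity for smooth, spatially decaying approximants and then pass to the limit; this also makes all the spatial integrations by parts below rigorous. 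Once that reduction is in place, all displayed manipulations are purely formal manipulations of convergent integrals.

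The core computation: write $I(t)=\int_{\R}\cA(t,y)D(t,y)\dy=\int_{\R}A(t,x)\cD(t,x)\dx$, where $\cA(t,y)=\int_{-\infty}^y A\dx$ and $\cD(t,x)=\int_x^\infty D\dy$ are the anti-derivatives introduced before the lemma, so that $\px\cA=A$ and $\px\cD=-D$. From \eqref{eq:SPDE-system} the anti-derivatives satisfy $d\cA+B\dt$-type relations after integrating in $x$ — more precisely, integrating the first SPDE over $(-\infty,y)$ gives $d\cA(t,y)=-\bigl(B(t,y)-B(t,-\infty)\bigr)\dt+\bigl(\int_{-\infty}^y C_A\dx\bigr)\dt+\Sigma_A(t,y)\dW$, and since $B\to 0$ at $-\infty$ the boundary term drops. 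Then apply the It\^{o} product rule to the product $\cA(t,y)D(t,y)$ (integrated in $y$): this produces a $d\cA\cdot D$ term, an $\cA\cdot dD$ term, and a quadratic-covariation term $d\langle\cA,D\rangle$. In the $\cA\cdot dD$ term I would substitute $dD=-\px E\dt+C_D\dt+\sigma_D\dW$ and integrate by parts in $y$, using $\px\cA=A$, to convert $-\int \cA\,\px E\dy$ into $\int A E\dy$; symmetrically, the $d\cA\cdot D$ term after using $d\cA=-\px(\text{something})$... — actually it is cleaner to handle the $B$-part by writing $-\int (\text{integral of }C_A)\cdot$... I will instead integrate the $B$ contribution by parts against $D$ directly: $-\int_{\R}\bigl(\int_{-\infty}^y\px B\bigr)D\dy$ — no, the honest route is to carry the $\px B$ through the product rule on $I(t)=\int A\,\cD\dx$ for the $B$ term and on $I(t)=\int\cA\,D\dy$ for the $E$ term, picking whichever representation makes the boundary terms vanish. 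Either way one lands on the deterministic drift $\int_{\R}(AE-DB)\dx$ plus the $C_A\cD$, $\cA C_D$ terms and the two stochastic integrals $\int\sigma_A\cD\dW$, $\int\cA\sigma_D\dW$.

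The quadratic-covariation term is what produces $-\int_0^T I_\sigma(t)\dt$: the cross-variation of the martingale parts of $\cA(t,\cdot)$ and $D(t,\cdot)$ (both driven by the single real Wiener process $W$) is $d\langle\cA(t,y),D(t,y)\rangle = \Sigma_A(t,y)\sigma_D(t,y)\dt$, and integrating in $y$ gives exactly $I_\sigma(t)\dt$ via the second equality in \eqref{eq:noise-interact-new}; the symmetry $\int\sigma_A\Sigma_D\dx=\int\Sigma_A\sigma_D\dy$ in \eqref{eq:noise-interact-new} is an elementary Fubini interchange on the region $\{x<y\}$, which I would verify first as a lemma-within-the-proof. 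Finally, integrating the resulting $dI(t)$ identity over $[0,T]$ yields the stated $+I(T)-I(0)$, and the integrability hypotheses ($\Ex\|(A,D)(t)\|_{L^1}^2<\infty$, $\Ex\int_0^T\|(\sigma_A,\sigma_D)(t)\|_{L^p}^2\dt<\infty$ for $p=1,2$) guarantee that every stochastic integral is a genuine It\^{o} integral and that the product-rule application is licit. The main obstacle is the rigorous justification of the product rule and the vanishing of the boundary terms at $x=\pm\infty$ for the (a priori only $L^1$, not pointwise-decaying with rate) processes — i.e., making the mollification/truncation limit argument clean — rather than the algebra, which is a direct transcription of the Golse--Perthame deterministic identity with the extra It\^{o} correction $-\int_0^T I_\sigma\dt$.
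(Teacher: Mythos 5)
Your proposal is correct and follows essentially the same route as the paper: reduce to smooth data by spatial mollification, apply the It\^{o} product rule to $I(t)$ (equivalently to $\int\cA D\,dy$ after Fubini), absorb the $\px B$ and $\py E$ terms into $\int(AE-DB)\dx$ via the fundamental theorem of calculus on $\{x<y\}$, identify the quadratic covariation with $I_\sigma(t)\,dt$, rewrite the remaining double integrals using the anti-derivatives $\cA,\cD,\Sigma_A,\Sigma_D$, and integrate in time.
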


\begin{proof}
Modulo an approximation argument involving spatial 
mollification, we may assume that the SPDE system 
\eqref{eq:SPDE-system} holds pointwise in $x$. Thus,
using the real-valued It\^{o} product formula, we 
compute as follows:
\begin{align*}
	& dI(t) 
	= \iint\limits_{x<y} 
	\Bigl(-\px B(t,x)\,dt+C_A(t,x)\,dt+\sigma_a(t,x)\dW
	\Bigr)D(t,y)\dx\dy
	\\ & \qquad \qquad 
	+\iint\limits_{x<y} A(t,x)
	\Bigl(-\py E(t,y)\,dt+C_D(t,y)\,dt+\sigma_D(t,y)\dW
	\Bigr)\dx \dy+I_\sigma(t)
 	\\ & \qquad =
	\left(\,\,\, \iint\limits_{x<y}C_A(t,x)D(t,y)
	\dx\dy\right)\dt
	+\left(\,\,\,\iint\limits_{x<y}\sigma_A(t,x)D(t,y)
	\dx\dy\right)\dW
	\\ & \qquad \qquad 
	+\left(\,\,\,\iint\limits_{x<y}A(t,x)C_D(t,y)
	\dx\dy\right)\dt
	+\left(\,\,\,\iint\limits_{x<y}A(t,x)\sigma_D(t,y)
	\dx\dy\dt\right)\dW
	\\ & \qquad \qquad
	+\left(\,\,\,\int_{\R} -B(t,y)D(t,y)\,dy \right)\,dt
 	+\left(\,\,\, \int_{\R} A(t,x)E(t,x)\,dx\right)\,dt
 	+I_\sigma(t),
\end{align*}
so that
\begin{align*}
	dI(t) & =\left(\,\,\int_{\R}
	\bigl(AE-DB\bigr)(t,x)\dx\right)\dt+I_\sigma(t)
	\\ & \quad 
	+\left(\,\,\, \iint\limits_{x<y}C_A(t,x)D(t,y)
	\dx\dy\right)\dt
	+ \left(\,\,\, \iint\limits_{x<y}A(t,x)C_D(t,y)
	\dx\dy\right)\dt
	\\ & \quad 
	+\left(\,\, \, \iint\limits_{x<y}\sigma_A(t,x)D(t,y)
	\dx\dy\right)\dW
	+ \left(\,\, \, \iint\limits_{x<y} 
	A(t,x)\sigma_D(t,y) \dx\dy\right)\dW.
\end{align*}
Observe that 
$$\iint\limits_{x<y} C_A(t,x)D(t,y)\dx\dy
=\int_{\R} C_A(t,x)\cD(t,x)\dx
$$ 
and 
$$
\iint\limits_{x<y} A(t,x)C_D(t,y)\dx\dy
=\int_{\R}\cA(t,y)C_D(t,y)\dy.
$$ 
Similarly, we have
$$
\left(\,\, \, \iint\limits_{x<y}\sigma_A(t,x)D(t,y)
\,dx\,dy\right)\dW=\left( \,\int_{R}
\sigma_A(t,x)\cD(t,x)\dx\right)\dW,
$$ 
and 
$$
\left(\,\, \, \iint\limits_{x<y} 
A(t,x)\sigma_D(t,y) \,dx\,dy\right)\,dW
=\left(\,\, \, \int_{\R} 
\cA(t,y)\sigma_D(t,y)\,dy\right)\,dW,
$$ 
and $I_\sigma(t)$ satisfies \eqref{eq:noise-interact-new}.  
Finally, upon integrating in time, 
we obtain \eqref{eq:x-interaction-stoch}.
\end{proof}

\subsection{Proof of Theorem \ref{thm:quant-CC-est}}
In what follows, we will write $\ue$ instead of $u_n$ 
for the solution of \eqref{eq:VV-1D}, where $\eps = \eps_n \to 0$ 
as $n \to \infty$. We will use $h$ to denote the 
spatial translation step instead of $z$, and we will 
define the spatial difference operator of a function 
$F:\mathbb{R} \to \mathbb{R}$ with step size $h \in (-1,1)$ 
as $\Delta_h F(x)=F(x+h)-F(x)$. 

Applying $\Delta_h$ 
to \eqref{eq:VV-1D}, \eqref{eq:VV-entropy-ineq} gives
\begin{align*}
	& d \bigl(\chi\Delta_h \ue\bigr) 
	+\px \bigl(\chi \Delta_h f(\ue)\bigr)\, dt
	\\ & \quad\quad
	=\px \chi \Delta_h f(\ue)\, dt
	+\eps \chi \pxx \Delta_h\ue\,dt
	+\chi \Delta_h\sigma(x,u)\,dW, 
	\\ & 
	d \bigl(\chi \Delta_h\eta(\ue)\bigr)
	+\px \bigl(\chi \Delta_h q(\ue)\bigr)\,dt
	\\ & \quad\quad
	=\px \chi \Delta_h q(\ue)\,dt
	+\eps\chi \pxx \Delta_h\eta(\ue)\,dt
	-\chi\Delta_h\mu_\eps
	\\ & \quad\quad\quad
	+\frac12 \chi \Delta_h 
	\left(\eta''(\ue)\sigma^2(x,\ue)\right)\,dt 
	+ \chi \Delta_h \bigl(\eta'(\ue) 
	\sigma(x,\ue)\bigr)\,dW. 
\end{align*}

Setting
\begin{align*}
	& A=\chi\Delta_h \ue, \quad 
	B=\chi \Delta_h f(\ue), \quad
	\\ & 
	C_A = \px \chi \Delta_h f(\ue)
	+\chi\eps \pxx \Delta_h\ue,
	\quad 
	\sigma_A=\chi\Delta_h\sigma(x,\ue)
\end{align*}
and
\begin{align*}
	& D=\chi \Delta_h \eta(\ue), 
	\quad 
	E=\chi \Delta_h q(\ue),
	\quad 
	C_D=C_{D,0}+
	\chi\bigl( C_{D,1}+C_{D,2}+C_{D,3}\bigr),  
	\\ & 
	C_{D,0}=
	\px \chi\Delta_h q(\ue),
	\quad
	C_{D,1}=\eps\pxx \Delta_h\eta(\ue),
	\quad 
	C_{D,2}=-\Delta_h\mu_\eps,
	\\ &
	C_{D,3}= \frac12 \Delta_h 
	\left(\eta''(\ue)\sigma^2(x,\ue)\right),
	\quad 
	\sigma_D=\chi \Delta_h 
	\bigl(\eta'(\ue) \sigma(x,\ue)\bigr),
\end{align*}
the SPDE system \eqref{eq:SPDE-system} holds. 

If $\chi$ is a weight function, then it follows that 
both $\sqrt{\chi}$ and $\chi^2$ are also weight functions. 
In other words, $\chi\in \cW \Longrightarrow 
\sqrt{\chi},\chi^2\in \cW$.

Given the stochastic interaction 
estimate \eqref{eq:x-interaction-stoch} 
and Lemma \ref{lem:nonlinear-flux-est-global}, we obtain
\begin{equation}\label{eq:stoch-Delta-x}
	\begin{split}
		&\Ex\iint
		\abs{\Delta_h \ue(t,x)}^{p_f+p_{\eta}+2}
		\,\chi^2(x)\dx\dt\lesssim
		\sum_{i=1}^8 J_i,
	\end{split}
\end{equation}
where
\begin{align*}
	& J_1 = -\Ex\iint \px \chi \Delta_h f(\ue) 
	\left(\int_x^\infty \chi \Delta_h \eta(\ue)\dy\right)\dx\dt,
	\\ & 
	J_2 = -\Ex\iint \chi \eps \pxx \Delta_h\ue 
	\left(\int_x^\infty \chi \Delta_h \eta(\ue)\dy\right)\dx\dt,
	\\ &
	J_3= -\Ex\iint \bigl(\px \chi\Delta_h q(\ue)\bigr(t,y) 
	\left(\int_{-\infty}^y \chi \Delta_h \ue\,dx\right)\dy\dt,
	\\ &
	J_4= -\Ex\iint \bigl(\chi \eps 
	\pxx \Delta_h\eta(\ue)\bigr)(t,y) 
	\left(\int_{-\infty}^y \chi \Delta_h \ue\,dx\right)\dy\dt,
	\\ &
	J_5=\Ex\iint \bigl(\chi\Delta_h \mu_\eps\bigr)(t,y)
	\left(\int_{-\infty}^y \chi \Delta_h \ue\dx\right)\dy\dt,
	\\ & 
	J_6 = -\frac12 \Ex\iint  \bigl(\chi \Delta_h
	\left(\eta''(\ue)\sigma^2(y,\ue)\right)\bigr)(t,y)
	\left(\int_{-\infty}^y \chi \Delta_h 
	\ue\dx\right)\dy\dt,
\end{align*}
and 
\begin{align*}
	J_7 & = -\Ex\int_0^T 
	\Biggl(\,\,\,\iint\limits_{x<y} 
	\chi(t,x)\Delta_h\sigma(x,\ue(t,x)))
	\\ & \qquad\qquad\qquad\quad\times 
	\chi(t,y)\Delta_h 
	\bigl(\eta'(\ue(t,y)) \sigma(y,\ue(t,y))
	\bigr)\,dx\,dy \Biggr)\,dt.
\end{align*}
Finally, 
$J_8=\Ex\iint\limits_{x<y} \bigl(\chi\Delta_h \ue\bigr)(t,x)
\bigl(\chi \Delta_h \eta(\ue)\bigr)(t,y)\dx \dy
\Big|_{t=0}^{t=T}$.

\smallskip

\noindent\underline{Estimates of $J_1$ and $J_3$.} 
First, 
\begin{align*}
	\mathcal{J}_1 & :=\int_x^\infty \chi(y)
	\Delta_h \eta(\ue(t,y))\dy
	= \int_x^\infty \Delta_h\bigl(
	\chi(y)) \eta(\ue(t,y))\bigr)\dy
	\\ & \qquad 
	-\int_x^\infty \Delta_h\chi(y)
	\eta(\ue(t,y+h))\dy
	=:\mathcal{J}_{1,1}+\mathcal{J}_{1,2},
\end{align*}
where
\begin{align*}
	\abs{\mathcal{J}_{1,1}}&=\abs{\int_x^{x+\abs{h}}
	\chi(y)\eta(\ue(t,y))\,dy} 
	\leq \abs{h}^{\frac{p-1}{p}}
	\norm{\eta(\ue)}_{L^\infty(0,T;L^p(\chi dx))}
\end{align*}
and, using $\abs{\px \chi(x)}\lesssim \chi(x)$ 
and \eqref{eq:weight-property}, 
\begin{align*}
	\abs{\mathcal{J}_{1,2}}
	& =\abs{\int_x^\infty \int_0^h 
	\partial_{y'}\chi(y+y')\dy'
	\eta(\ue(t,y+h))\dy}
	\lesssim \abs{h}\norm{\eta(\ue)}_{L^\infty(0,T;L^1(\chi dx))}.
\end{align*}
As a result,
\begin{equation}\label{eq:anti-der-chi-eta}
	\begin{split}
		\abs{\mathcal{J}_1}
		\lesssim \abs{h}^{1-\frac{1}{p}}
		\norm{\eta(\ue)}_{L^\infty(0,T;L^p(\chi dx))},
	\end{split}
\end{equation}
and thus, by Young's product inequality,
\begin{align*}
	\abs{J_1} 
	\lesssim \abs{h}^{1-\frac{1}{p}}\Biggl(
	\Ex\abs{\iint \abs{\Delta_h f(\ue)}\chi(x)\,dx\,dt}^2
	+\Ex\norm{\eta(\ue)}_{L^\infty(0,T;L^p(\chi dx))}^2 
	\Biggr).
\end{align*}
Given \eqref{eq:stoch-ue-est1} 
and \eqref{eq:eta-pol}, it follows that 
$\abs{J_1}\lesssim \abs{h}^{1-\frac{1}{p}}$, 
$p\in [2,\infty)$.

Similarly, 
$\abs{J_3}\lesssim \abs{h}^{1-\frac{1}{p}}$, 
$p\in [2,\infty)$.

\smallskip

\noindent\underline{Estimate of $J_2, J_4$.} 
Integration by parts gives
\begin{align*}
	J_2 & =\Ex\iint \eps \px \chi \Delta_h \px\ue 
	\left(\int_x^\infty \chi 
	\Delta_h \eta(\ue)\,dy\right)\,dx\,dt
	\\ & \qquad 
	-\Ex\iint \eps \Delta_h \px\ue 
	\Delta_h \eta(\ue) \chi^2\,dx\,dt
	=: J_{2,1}+J_{2,2}.
\end{align*}
By \eqref{eq:anti-der-chi-eta}, 
$\abs{\px \chi(x)}\lesssim \chi(x)$, 
and the Cauchy--Schwarz/Young product inequalities, 
\begin{align*}
	\abs{J_{2,1}} & \lesssim_\chi 
	\Ex\Biggl[ \left(\iint \eps \abs{\px \ue}^2\chi(x)\,dx\,dt
	\right)^{1/2}\left(\abs{h}^{1-\frac{1}{p}}
	\norm{\eta(\ue)}_{L^\infty(0,T;L^p(\chi dx))}
	\right)\Biggr]
	\\ & \lesssim 
	\abs{h}^{1-\frac{1}{p}} \Biggl(
	\Ex\iint \eps \abs{\px \ue}^2 \chi(x)\,dx\,dt
	+\Ex\norm{\eta(\ue)}_{L^\infty(0,T;L^p(\chi dx))}^2 
	\Biggr).
\end{align*}
Making use of \eqref{eq:stoch-ue-est1} 
and \eqref{eq:eta-pol}, 
$\abs{J_{2,1}}\lesssim \abs{h}^{1-\frac{1}{p}}$, 
$p\in [2,\infty)$.

Regarding $J_{2,2}$, let us first estimate 
$\chi\Delta_h \eta(\ue)$: 
\begin{align*}
	& \abs{\chi(x)\Delta_h \eta(\ue(t,x))}
	=\abs{\int_0^{\abs{h}}\partial_{x'} 
	\eta(\ue(t,x+x')) \chi(x)\dx'}
	\\ & \qquad \overset{\eqref{eq:weight-property}}{\lesssim}
	\int_0^{\abs{h}}\abs{\eta'(\ue(t,x+x'))}
	\abs{\partial_{x'}\ue(t,x+x'))}\chi(x+x')\dx'
	\\ & \qquad \leq 
	\left(\int_0^{\abs{h}}
	\abs{\eta'(\ue(t,x+x'))}^2\chi(x+x')\dx'\right)^{1/2}
	\\ & \qquad \qquad\qquad\times 
	\left(\int_0^{\abs{h}} \abs{\partial_{x'} 
	\ue(t,x+x')}^2 \chi(x+x')\dx'\right)^{1/2},
\end{align*}
where, setting $A:= \left(\int_0^{\abs{h}}
	\abs{\eta'(\ue(t,x+x'))}^2\chi(x+x')\dx'\right)^{1/2}$,
\begin{align*}
	& A \lesssim_\chi \abs{h}^{\frac{p-1}{2p}}
	\left(\int_0^{\abs{h}}
	\abs{\eta'(\ue(t,x+x'))}^{2p}\chi(x+x')\dx'
	\right)^{1/(2p)}
	\\ & \quad 
	\overset{\eqref{eq:eta-pol}}{\lesssim}
	\abs{h}^{\frac{p-1}{2p}}\left(1+
	\norm{\ue}_{L^\infty(0,T;L^{q_x}(\chi dx))}^{q_t}\right),
\end{align*}
for some finite $q_t,q_x$, depending on $2p$ 
and $p_0-1$, cf.~\eqref{eq:eta-pol}. Therefore,
\begin{equation}\label{eq:J22-est}
	\begin{split}
		&\abs{J_{2,2}} \lesssim \abs{h}^{\frac12-\frac{1}{2p}}
		\Ex \Biggl[\left(1
		+\norm{\ue}_{L^\infty(0,T;L^{q_x}(\chi dx))}^{q_t}\right)
		\\ & \qquad \times
		\underbrace{\iint \eps \Delta_h\px\ue
		\left(\int_0^{\abs{h}} \abs{\partial_{x'}
		\ue(t,x+x')}^2\chi(x+x')\dx'\right)^{1/2} 
		\chi\dx\dt}_{=:B}\Biggr],
	\end{split}	
\end{equation}
where, by the Cauchy-Schwarz inequality,
\begin{align*}
	& B \lesssim 
	\norm{\sqrt{\eps}\Delta_h\px\ue}_{L^2(0,T;L^2(\chi dx))}
	\left(\int_0^{\abs{h}}\!\!\!\iint\eps
	\abs{\partial_{x'}\ue(t,x+x')}^2\chi(x)
	\dx\dt\dx'\right)^{1/2}
	\\ & \quad 
	\leq \sqrt{\abs{h}}\norm{\sqrt{\eps}
	\Delta_h\px\ue}_{L^2(0,T;L^2(\chi dx))}
	\norm{\sqrt{\eps}\px\ue}_{L^2(0,T;L^2(\chi dx))},
\end{align*}
using again \eqref{eq:weight-property} to replace $\chi(x)$ by 
a constant times $\chi(x+x')$. 
Hence, for any finite $p_1,p_2,p_3$ such that 
$\frac{1}{p_1}+\frac{1}{p_2}+\frac{1}{p_3}=1$, the 
generalised H\"older inequality yields
\begin{equation}\label{eq:gen-Holder}
	\begin{split}		
		\abs{J_{2,2}} 
		\lesssim \abs{h}^{1-\frac{1}{2p}}
		\left(\Ex \abs{J_{2,1}^{(1)}}^{p_1}
		\right)^{\frac{1}{p_1}} 
		\left(\Ex\abs{J_{2,2}^{(1)}}^{p_2}
		\right)^{\frac{1}{p_2}}
		\left(\Ex\abs{J_{2,3}^{(1)}}^{p_3}
		\right)^{\frac{1}{p_3}},
	\end{split}
\end{equation}
where $J_{2,2}^{(1)}=\left(1+
\norm{\ue}_{L^\infty(0,T;L^{q_x}(\chi dx))}^{q_t}\right)$, 
$J_{2,2}^{(2)}=\norm{\sqrt{\eps}
\Delta_h\px\ue}_{L^2(0,T;L^2(\chi dx))}$, 
and $J_{2,2}^{(3)}=\norm{\sqrt{\eps} \px\ue}_{L^2(0,T;L^2(\chi dx))}$.
By \eqref{eq:stoch-ue-est1}, we conclude that
\begin{equation}\label{eq:J22-est2}
	\abs{J_{2,2}}\lesssim \abs{h}^{1-\frac{1}{2p}}.
\end{equation}
Summarising,
$\abs{J_2}\lesssim \abs{h}^{1-\frac{1}{p}}+
\abs{h}^{1-\frac{1}{2p}} 
\lesssim \abs{h}^{1-\frac{1}{p}}$, $p\in [2,\infty)$.

Similarly, integration by parts gives
\begin{align*}
	J_4 & =\Ex\iint \bigl(\eps \px \chi 
	\Delta_h \px\eta(\ue)\bigr)(t,y) 
	\left(\int_{-\infty}^y \chi 
	\Delta_h \ue\,dx\right)\,dy\,dt
	\\ & \qquad 
	-\Ex\iint \eps \Delta_h \px\eta(\ue) 
	\Delta_h \ue \chi^2\,dy\,dt
	=: J_{4,1}+J_{4,2}.
\end{align*}
By slightly modifying the calculation leading up 
to \eqref{eq:anti-der-chi-eta},
\begin{equation}\label{eq:anti-der-chi-u}
	\abs{\int_{-\infty}^y \chi(x)\Delta_h \ue(t,x)\dx}
	\lesssim \abs{h}^{1-\frac{1}{p}}
	\norm{\ue}_{L^\infty(0,T;L^p(\chi dx))},
\end{equation}
for any finite $p\ge 2$, and so, by the
Cauchy--Schwarz inequality,
\begin{align*}
	&\abs{J_{4,1}}
	\lesssim \sqrt{\eps}
	\Ex\Biggl[
	\iint \abs{\eta'(\ue)}
	\sqrt{\eps} \abs{\px\ue}\chi \dy\dt 
	\left( \abs{h}^{1-\frac{1}{p}}
	\norm{\ue}_{L^\infty(0,T;L^p(\chi dx))}
	\right)\Biggr]
	\\ & \lesssim 
	\abs{h}^{1-\frac{1}{p}}
	\Ex\Biggl[\norm{\eta'(\ue)}_{L^2(0,T;L^2(\chi dx))}
	\!\norm{\sqrt{\eps} \px\ue}_{L^2(0,T;L^2(\chi dx))}
	\! \norm{\ue}_{L^\infty(0,T;L^p(\chi dx))}
	\Biggr].
\end{align*}
Arguing as in \eqref{eq:gen-Holder} (via the 
generalised H\"older inequality) and 
using \eqref{eq:stoch-ue-est1} and 
\eqref{eq:eta-pol}, we arrive at
$\abs{J_{4,1}}\lesssim \abs{h}^{1-\frac{1}{p}}$. 

Regarding $J_{4,2}$, let us first 
do one more integration by parts:
\begin{align*}
	J_{4,2} & =\underbrace{\Ex\iint \eps \Delta_h \eta(\ue) 
	\Delta_h \px \ue \chi^2\dy\dt}_{=:J_{4,2}^{(1)}}
	+\underbrace{\Ex\iint \eps \Delta_h \eta(\ue) 
	\Delta_h \ue 2\chi \px \chi\dy\dt}_{=:J_{4,2}^{(2)}}.
\end{align*}
Note that $J_{4,2}^{(1)}$ is of the same 
form as $J_{2,2}$, and by slightly modifying 
the calculations leading up to 
\eqref{eq:J22-est}, \eqref{eq:J22-est2}, we obtain
$\abs{J_{4,2}^{(1)}}\lesssim \abs{h}^{1-\frac{1}{p}}$. 
For $J_{4,2}^{(2)}$, noting that $\abs{\Delta_h 
\ue(t,y)}\leq \int_0^{\abs{h}}\abs{\px \ue(t,y+y')}\dy'$ 
and $\abs{\px \chi(y)}\lesssim \chi(y)$, 
we proceed as follows:
\begin{align*}
	\abs{J_{4,2}^{(2)}}\,\,
	& \overset{\eqref{eq:weight-property}}{\lesssim}
	\sqrt{\eps}\Ex \int_0^{\abs{h}}\!\!\! 
	\iint \abs{\chi\Delta_h \eta(\ue)} 
	\abs{\sqrt{\eps} \px \ue(t,y+y')\chi(y+y')}
	\dy\dt\dy'
	\\ & \lesssim_{\chi}
	\sqrt{\eps}\Ex \int_0^{\abs{h}}
	\norm{\Delta_h \eta(\ue)}_{L^2(0,T;L^2(\chi dx))}
	\norm{\sqrt{\eps} \px \ue}_{L^2(0,T;L^2(\chi dx))}\dy'
	\\ & \lesssim \abs{h} \left( 
	\Ex\norm{\Delta_h \eta(\ue)}_{L^2(0,T;L^2(\chi dx))}^2 
	+\Ex\norm{\sqrt{\eps} \px \ue}_{L^2(0,T;L^2(\chi dx))}^2\right).
\end{align*}
Given \eqref{eq:stoch-ue-est1} and \eqref{eq:eta-pol}, 
we conclude that $\abs{J_{4,2}^{(2)}}\lesssim \abs{h}$
and thus $\abs{J_{4,2}}\lesssim 
\abs{h}^{1-\frac{1}{p}}$. 

Summarising,
$\abs{J_4}\lesssim \abs{h}^{1-\frac{1}{p}}$, $p\in [2,\infty)$.

\smallskip

\noindent\underline{Estimate of $J_5$.} 
We have
\begin{align*}
	\abs{J_5} & \lesssim 
	\Ex\iint \chi(y) \bigl(\abs{\mu_\eps}(t,y+h)
	+\abs{\mu_\eps}(t,y)\bigr)
	\abs{\int_{-\infty}^y \chi(x)
	\Delta_h \ue(t,x)\dx}\dy\dt.
\end{align*}
By \eqref{eq:anti-der-chi-u} 
and \eqref{eq:stoch-ue-est1}, we deduce that, 
for any $p\in [2,\infty)$,
\begin{align*}
	\abs{J_5} & \lesssim 
	\abs{h}^{1-\frac{1}{p}}
	\Ex \left[\norm{\ue}_{L^\infty(0,T;L^p(\chi dx))}
	\abs{\iint \chi(y)\,d\abs{\mu_\eps}(t,y)}\right]	\\ & \lesssim
	\abs{h}^{1-\frac{1}{p}}
	\left(\Ex \norm{\ue}_{L^\infty(0,T;L^p(\chi dx)}^2
	+\Ex\abs{\iint \chi(y)\,d\abs{\mu_\eps}(t,y)}^2\right)
	\lesssim \abs{h}^{1-\frac{1}{p}}.
\end{align*}

\smallskip

\noindent\underline{Estimate of $J_6$.} 
Using \eqref{eq:anti-der-chi-u}, 
\begin{align*}
	\abs{J_6} & = \frac12 \Ex\iint \abs{\chi \Delta_h 
	\left(\eta''(\ue)\sigma^2(y,\ue)\right)}
	\abs{\int_{-\infty}^y \chi \Delta_h \ue\dx}\dy\dt
	\\ & \lesssim 
	\abs{h}^{1-\frac{1}{p}}
	\Ex\left[\iint\abs{\Delta_h 
	\left(\eta''(\ue)\sigma^2(y,\ue)\right)}\chi(y)\dy\dt
	\norm{\ue}_{L^\infty(0,T;L^p(\chi dx))}\right]
	\\ & \lesssim 
	\abs{h}^{1-\frac{1}{p}}
	\Biggl(\Ex\abs{\iint\abs{\Delta_h 
	\left(\eta''(\ue)\sigma^2(y,\ue)\right)}\chi(y)\dy\dt}^2 
	+\Ex \norm{\ue}_{L^\infty(0,T;L^p(\chi dx))}^2\Biggr).
\end{align*}
In view of \eqref{eq:stoch-ue-est1}, 
\eqref{eq:eta-pol} and \eqref{eq:sigma-lin}, we conclude that
$\abs{J_6}\lesssim \abs{h}^{1-\frac{1}{p}}$, 
$p\in [2,\infty)$.

\smallskip

\noindent\underline{Estimate of $J_7$.}
To estimate $J_7$, let us write
\begin{align*}
	& \iint\limits_{x<y} 
	\chi(x)\Delta_h\sigma(x,\ue(t,x))) 
	\chi(y)\Delta_h \bigl(\eta'(\ue(t,y)) 
	\sigma(y,\ue(t,y))\bigr)\dx\dy
	\\ & \quad 
	= \int_{\R} \left(\int_{-\infty}^y 
	\chi(x)\Delta_h\sigma(x,\ue(t,x)))\dx \right)
	\chi(y)\Delta_h
	\bigl(\eta'(\ue(t,y))\sigma(y,\ue(t,y))\bigr)\dy.
\end{align*}
Using \eqref{eq:anti-der-chi-u} with $\ue$ 
replaced by $\sigma(x,\ue)$, we obtain
\begin{align*}
	\abs{\int_{-\infty}^y 
	\chi(x)\Delta_h\sigma(x,\ue(t,x)))\,dx}
	&\lesssim \abs{h}^{1-\frac{1}{p}}
	\norm{\sigma(\cdot,\ue)}_{L^\infty(0,T;L^p(\chi dx))}
	\\ & 
	\overset{\eqref{eq:sigma-lin}}{\lesssim} 
	\abs{h}^{1-\frac{1}{p}}
	\left(1+\norm{\ue}_{L^\infty(0,T;L^p(\chi dx))}\right),
\end{align*}
which implies that 
\begin{align*}
	\abs{J_7} & \leq \abs{h}^{1-\frac{1}{p}}
	\Ex \left[\left(1+\norm{\ue}_{L^\infty(0,T;L^p(\chi dx))}\right)
	\iint \chi\Delta_h \bigl(\eta'(\ue)
	\sigma(y,\ue)\bigr)\,dy\,dt\right]
	\\ &  
	\lesssim  \abs{h}^{1-\frac{1}{p}}
	\Biggl(\Ex\abs{1+\norm{\ue}_{L^\infty(0,T;L^p(\chi dx)}}^2
	+\Ex\abs{\iint \chi \Delta_h 
	\bigl(\eta'(\ue)\sigma(y,\ue)\bigr)\,dy\,dt}^2\Biggr).
\end{align*}
Given \eqref{eq:stoch-ue-est1}, 
\eqref{eq:eta-pol} and \eqref{eq:sigma-lin}, we obtain 
$\abs{J_7}\lesssim \abs{h}^{1-\frac{1}{p}}$, $p\in [2,\infty)$.

\smallskip

\noindent\underline{Estimate of $J_8$.} For any fixed $t$, 
we write
\begin{align*}
	&\iint\limits_{x<y} \bigl(\chi\Delta_h \ue\bigr)(t,x)
	\bigl(\chi(x)\Delta_h \eta(\ue)\bigr)(t,y)\dx\dy
	\\ & \quad 
	=\int_{\R} \left(\int_{-\infty}^y
	\chi(x)\Delta_h \ue(t,x)\dx\right)
	\chi(y)\Delta_h \eta(\ue(t,y)) \dy.
\end{align*}
By combining this with \eqref{eq:anti-der-chi-u}, 
Young's product inequality, \eqref{eq:eta-pol}, 
and \eqref{eq:stoch-ue-est1}, we 
obtain the following bound for $J_8$, 
for any finite $p\ge 2$:
\begin{align*}
	\abs{J_8}\lesssim 
	\abs{h}^{1-\frac{1}{p}}
	\Ex\left[\norm{\ue}_{L^\infty(0,T;L^p(\chi dx))}
	\norm{\eta(\ue)}_{L^\infty(0,T;L^1(\chi dx))}\right]
	\lesssim \abs{h}^{1-\frac{1}{p}}.
\end{align*}

\noindent\underline{Conclusion.} Summarising, 
\eqref{eq:stoch-Delta-x} is bounded by 
$C(\chi,p) \abs{h}^{1-\frac{1}{p}}$, $p\in [2,\infty)$, 
which, upon replacing $h$ by $z$, $\eps$ by $\eps_n$, 
and $\ue$ by $u_n$, becomes \eqref{eq:stoch-CC-est}. 



\end{document}